\documentclass[11pt]{amsart}

\usepackage{standard}
\usepackage{tikz}
\newcommand{\tV}{\Upsilon} %% My favorite (feel free to change,
                           %% though) --JW.
\newcommand{\hexpt}{\nu}
\newcommand{\wexpt}{\lambda}

\usepackage{xcolor}

\thanks{The authors are grateful to Luc Hillairet for comments on the
  manuscript and for helpful conversations; in particular, JW wishes
  to acknowledge the strong influence of Hillairet's point of view on
  this subject on his own, formed in the course of writing
  \cite{HiWu:17}. Thanks also to Ethan Brady and the anonymous referee
for helpful comments and corrections. This collaboration came out of a discussion had
  through a virtual meeting hosted by the Casa Matem\'atica  Oaxaca;
  the authors are grateful to the institution for hosting them.  K.D.\
  acknowledges support from NSF grant DMS-1708511.
  J.L.M.\ acknowledges support from NSF grant DMS-1909035. J.W.\ was
  partially supported by Simons Foundation grant 631302, NSF grant
  DMS--2054424, and a Simons Fellowship.  }

\author[K. Datchev]{Kiril Datchev} \address{Mathematics
  Department, Purdue University, West Lafayette,
  IN 47907, USA} \email{kdatchev@purdue.edu}

\author[J.~L.~Marzuola]{Jeremy~L.~Marzuola} \address{Mathematics
  Department, University of North Carolina at Chapel Hill, Chapel Hill,
  NC 27599, USA} \email{marzuola@math.unc.edu}
  
  \author[J. Wunsch]{Jared Wunsch} \address{Mathematics
  Department, Northwestern University, Evanston,
  IL 60208, USA} \email{jwunsch@math.northwestern.edu}

\usepackage[letterpaper,hmargin=1.5in,vmargin=1.2in]{geometry} % I made the margins bigger so the big matrices would fit but feel free to change - KD
% Sorry, I shrank again (temporarily) so margin pars will fit -- JW

\title{Newton polygons and resonances of multiple delta-potentials}
\date{\today}

\begin{document}

\begin{abstract}
We prove explicit asymptotics for the location of semiclassical
  scattering resonances in the
  setting of $h$-dependent delta-function potentials on
    $\mathbb{R}$.  In the cases of two or three delta poles, we are
  able to show that resonances occur along specific lines of the form
  $\Im z \sim -\gamma h \log(1/h).$  More generally, we use the method
  of Newton polygons to show that resonances near the real axis may
  only occur along a finite collection of such lines, and we bound the
  possible number of values of the parameter $\gamma.$  We present numerical
  evidence of the existence of more and more possible values of
  $\gamma$ for larger numbers of delta poles.
\end{abstract}

\maketitle

\section{Introduction}

We consider certain ``leaky'' semiclassical quantum systems where most of the energy
escapes to infinity but some $h$-dependent fraction is trapped.  In
such settings, it has often been observed that strings of resonances
occur along curves $\Im z \sim -\gamma h \log (1/h)$ for certain values
of $\gamma$ related to the geometry.  This has been observed, with
varying degrees of precision, 
in scattering with nonsmooth potentials on the real line
\cite{Regge:Analytic}, \cite{Zworski4}; scattering by multiple delta
singularities in $\RR^3$ \cite{Ze:01};
  scattering between a corner and an
analytic obstacle \cite{Burq:Coin}; scattering on a manifold with
conic singularities \cite{dkk}, \cite{Ga:15}, \cite{HiWu:17}; and scattering by
thin barriers, modeled by $h$-dependent $\delta$-potentials
\cite{DaMa:22}, \cite{Ga:14}.  In some of these settings where the
geometry of trapping is relatively simple, e.g., 
 \cite{Burq:Coin}, \cite{DaMa:22}, the structure of all resonances near the real axis can
be precisely understood, with one or more strings of resonances
occurring at \begin{equation}\label{Im}\Im z \sim -\gamma h \log
  (1/h),\end{equation} for certain values of $\gamma$ and no others present.  More generally, however, the picture is
muddier, with some information known about $O(h \log (1/h))$-width
resonance-free regions near $\RR$ and in some cases about existence of
a limited region in which the resonances are distributed as in
\eqref{Im}.

Here we analyze a situation in which the geometry of trapping is
complicated enough to generate multiple strings of resonances of the
form \eqref{Im}, and moreover for that structure to vary interestingly
as we tune the parameters of the problem.  This is the situation of
several thin barriers on $\RR,$ modeled by potentials of the form
$h^{1+\beta}\delta(x)$, $\beta > 0.$ 
One dimensional problems with delta function barriers have been studied before in \cite[Section II.2]{aghh}, \cite{DaMa:22}, \cite{duchene2011wave}, \cite{herbstmavi}, \cite{sacchetti}, \cite{Tanimu_2018}  but only the second reference considered our asymptotic regime, and that only in a very special case. In the case of two and three
delta poles, we are
able to analyze the distribution of resonances very precisely: in the
former case, there is a single curve of resonances near the real axis
with $\Im z \sim -\gamma h \log (1/h)$ (Theorem~\ref{t:2delta}); in the
latter, there may be either one or two such families instead
(Theorem~\ref{t:3deltagen}). In particular, in the latter case there is one family 
if the deltas all have equal strength. In \cite[Appendix A]{Ze:01}, Zerzeri computes
resonances of multiple delta poles in $\mathbb R^3$ and finds analogously that
they  are all asymptotic to a single logarithmic curve.

In the more general case of $N$
$\delta$-potentials,  we are able to constrain the locations of
resonances by analyzing the secular determinant that governs their
locations in terms of its Newton polygon.  We show
(Theorem~\ref{t:Ndelta}) that in any set $\Im z \geq -M h \log(1/h)$
there may be no more than $2^{N-1}-1$
possible values for the parameter $\gamma$ in \eqref{Im}, and that all
such possible values may be simply expressed in terms of the various strengths
$\beta$ of the potential poles and differences of distances among them.

\section{General setup}
Consider the semiclassical Hamiltonian on the real line
  $$
P=-h^2\pa_x^2 + V(x), \qquad V(x)=\sum_{j=1}^{N} V_j\delta(x-x_j), \qquad h>0,
  $$
where $x_1<\cdots<x_N$, and each $V_j = C_j h^{1+\beta_j}$ for some $C_j \in \mathbb R \setminus\{0\}$ and $\beta_j >0$.

A \textit{resonant state} $u$ is an outgoing distributional solution to
\begin{equation}\label{eigenfunction}
(-h^2 \pa_x^2 + V -z^2)u=0,
\end{equation}
and a \textit{resonance} is a value of $z \in \mathbb C$ for which a resonant state exists. More explicitly, define $I_j$ for $j=0,\dots, N$,    by $I_0=(-\infty, x_1]$,  $I_j=[x_j, x_{j+1}]$ when $1\le j\le N-1$, and $I_{N}=[x_N,
    +\infty)$. If \eqref{eigenfunction} holds in the sense of distributions, then $u= v_j^+ e^{izx/h} + v_j^- e^{-izx/h}$ on $I_j$, with appropriate continuity and jump conditions (which we state in \eqref{e:continuity} and \eqref{e:jump} below) at each $x_j$. Such a solution $u$ is \textit{outgoing} if it is not identically zero and if $v_N^-= v_0^+=0$. See Section 2.1 et seq. of \cite{dz19} for an introduction to resonances.

For \eqref{eigenfunction} to hold we need $u$ to be continuous at each $x_j$, i.e., the continuity condition is
\begin{equation}\label{e:continuity}
-v_{j-1}^- e^{-i x_j z/h}- v_{j-1}^+ e^{+i x_j z/h}+  v_j^- e^{-i x_j z/h}+ v_j^+ e^{+i x_j z/h}=0.
\end{equation}
Moreover, $u'$ must have a jump at each $x_j$ so that $(h^2\pa_x^2+z^2) u$ contains a multiple of $\delta(x-x_j)$ which equals $V_ju(x_j)\delta(x-x_j)$. That leads to the jump condition
\begin{equation}\label{e:jump}\begin{split}
& \frac{hz}{i} \left(v_{j-1}^-e^{-izx_j/h}-v_{j-1}^+e^{+izx_j/h}
  -v_j^- e^{-izx_j/h}+ v_j^+e^{+izx_j/h}\right)  \\
  & \hspace{4.5cm} + V_j  ( v_j^- e^{-i x_j z/h}+ v_j^+ e^{+i x_j z/h})=0.
\end{split}\end{equation}

To bring the continuity and jump conditions \eqref{e:continuity} and \eqref{e:jump} to a more manageable form, we now require $z \ne 0$, set 
  \begin{equation}\label{Vtilde} \tV_j = \frac {V_j}{2izh} = \frac{C_j h^{\beta_j}}{2iz},\end{equation}
  and take
  \begin{equation*}
w =e^{-iz/h}, \quad
  y_j^+ =v_j^+ e^{i x_{j} z/h}, \quad
    y_j^- =v_j^- e^{-i x_{j+1} z/h}.
\end{equation*}
  These are the values of the amplitudes immediately following interaction with the potential poles.

Let $\ell_j= x_{j+1}-x_j=\abs{I_j}.$  
  Our continuity and jump equations \eqref{e:continuity} and \eqref{e:jump}  now read
  $$
\left\{   \begin{aligned}
 &   - y_{j-1}^- - y_{j-1}^+ e^{i \ell_{j-1} z/h} + y_j^- e^{i \ell_j z/h} + y_j^+ =0, \\
& y_{j-1}^- - y_{j-1}^+ e^{i \ell_{j-1} z/h} + y_j^-e^{i \ell_j z/h}(-1-2 \tV_j)  + y_j^+(1- 2 \tV_j) =0.
\end{aligned} \right.
$$
    Adding these equations yields
\begin{equation}\label{eq1}
y_j^+ = T_j e^{i \ell_{j-1}z/h} y_{j-1}^+ + R_j e^{i \ell_j z/h} y_j^-
\end{equation}
    with
    \begin{equation}\label{TR}
T_j = \frac{1}{1-\tV_j},\ R_j=\frac{\tV_j}{1-\tV_j}.
\end{equation}
Subtracting $1-2 \tV_j$ times the first from the second yields likewise
\begin{equation}\label{eq2}
y_{j-1}^-= T_je^{i \ell_j z/h} y_j^-+R_j e^{i\ell_{j-1} z/h}  y_{j-1}^+.
\end{equation}
In the extreme cases $j=0$ or $N$ we simply get the special cases where there is no reflection:
$$
y_N^+=T_N e^{i \ell_{N-1}z/h} y_{N-1}^+ 
$$
and
$$
y_0^-=T_1e^{i \ell_1 z/h} y_1^-.
$$
Note that these components are completely determined by the others.

\section{Logarithmic strings for two and three deltas}

In this section we  consider the simpler cases $N=2$ and $N=3$, in which our description of the resonances is more complete.

\subsection{Two deltas}
Let  $N=2$, and put $\ell_1 = \ell$.

\begin{theorem}\label{t:2delta}
 All resonances obeying $1/2 \le |z| \le 2$ and $\operatorname{Re} z > 0$ are given by
\begin{equation}\label{e:2deltaexp}
 z_k = \frac{\pi h k}{\ell} - i \frac {\beta_1+\beta_2}{2\ell} h \log (1/h) + O(h),
\end{equation}
for some positive integers $k$. Moreover, for any $\delta$ such that  $\delta<1$ and $\delta \le \min(\beta_1,\beta_2)$ we have
\begin{equation}\label{e:2deltarefre}
\operatorname{Re} z_k = \frac {\pi h} \ell  \Big(k + \frac{H(C_1C_2)}2  + O(h^\delta)\Big), 
\end{equation}
where $H$ is the Heaviside function, and
\begin{equation}\label{e:2deltarefim}
 \operatorname{Im} z_k = \frac h {2\ell} \Bigg(-(\beta_1+\beta_2) \log (1/h) +  \log \Big( \frac{|C_1C_2| \ell^2}{4\pi^2h^2k^2}  \Big) + O(h^\delta) \Bigg).
\end{equation}

\end{theorem}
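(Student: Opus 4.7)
The plan is to derive an explicit secular equation for $N=2$, solve it asymptotically by a contraction mapping, and use the argument principle to ensure no resonances are missed. Imposing the outgoing conditions $y_0^+=0$ and $y_2^-=0$ on \eqref{eq1}--\eqref{eq2} reduces them to
\[
y_1^+ = R_1 e^{i\ell z/h}\,y_1^-, \qquad y_1^- = R_2 e^{i\ell z/h}\,y_1^+,
\]
and a nontrivial middle-interval state exists iff
\begin{equation}\label{e:plan2sec}
R_1 R_2\, e^{2i\ell z/h} = 1,
\end{equation}
equivalently $\tV_1\tV_2\, e^{2i\ell z/h} = (1-\tV_1)(1-\tV_2)$. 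Since $\tV_j = C_jh^{\beta_j}/(2iz) = O(h^{\beta_j})$ uniformly on $1/2\le|z|\le 2$, taking logarithms converts \eqref{e:plan2sec} into the fixed-point identity
\[
z = \frac{\pi h k}{\ell} + \frac{h}{2i\ell}\log\!\left(\frac{-4z^2(1-\tV_1)(1-\tV_2)}{C_1C_2\,h^{\beta_1+\beta_2}}\right),\qquad k\in\mathbb{Z},
\]
with $k$ indexing the branch of $\log$. The dominant real part of $\log(\cdots)$ is $(\beta_1+\beta_2)\log(1/h)$, which immediately produces the shape \eqref{e:2deltaexp}.

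To solve rigorously, for each integer $k$ with $\pi hk/\ell\in[1/2,2]$ set $z_k^{(0)}=\pi hk/\ell-i(\beta_1+\beta_2)h\log(1/h)/(2\ell)$, and let $\Phi_k$ denote the right-hand side of the fixed-point identity, with the branch of $\log$ selected so that $\Phi_k(z_k^{(0)})$ is near $z_k^{(0)}$. A direct computation gives $(\log(\cdots))'(z)=(2+R_1+R_2)/z=O(1)$ for $z$ near $z_k^{(0)}$, and hence $\Phi_k'=O(h)$ there; thus $\Phi_k$ is a contraction of a disk $D_k$ of radius $O(h)$ around $z_k^{(0)}$ into itself, producing a unique fixed point $z_k\in D_k$ that solves \eqref{e:plan2sec}. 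To establish completeness, apply the argument principle to the holomorphic function $F(z):=\tV_1\tV_2 e^{2i\ell z/h}-(1-\tV_1)(1-\tV_2)$ over a tiling of $\{1/2\le|z|\le 2,\ \operatorname{Re} z>0\}$ by rectangles of horizontal width $\pi h/\ell$, using a quantitative lower bound $|F|\gtrsim 1$ on their boundaries; each rectangle then contains exactly one zero, matching the contraction-mapping solutions.

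The refined expansions \eqref{e:2deltarefre}--\eqref{e:2deltarefim} follow by substituting $z=z_k^{(0)}+O(h)$ into the fixed-point identity once more, Taylor-expanding $\log(1-\tV_j)=-\tV_j+O(h^{2\beta_j})$, and separating real and imaginary parts. The Heaviside contribution in \eqref{e:2deltarefre} arises from the branch of $\log(-C_1C_2/(4z_k^2))$: for real $z_k>0$ and $C_1C_2>0$ the argument is negative, the principal branch contributes an additive $i\pi$, and division by $2i\ell/h$ produces the claimed real shift; for $C_1C_2<0$ the logarithm is real and no shift occurs. The imaginary part yields the logarithmic correction in \eqref{e:2deltarefim} directly, and the $O(h^\delta)$ remainder comes from the $\log(1-\tV_j)=O(h^{\beta_j})$ term and from replacing $z$ by $z_k^{(0)}$ inside $\log z^2$, with any $\delta<1$ and $\delta\le\min(\beta_1,\beta_2)$ admissible.

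I expect the main obstacle to be the completeness step: establishing via the argument principle that each rectangle contains exactly one zero of $F$ requires a careful quantitative lower bound on $|F|$ along rectangle boundaries, together with uniform control of the branch of $\log$ throughout the tiling. Once these estimates are in hand, the contraction mapping and the one-step iteration yielding the sharp expansions \eqref{e:2deltarefre}--\eqref{e:2deltarefim} are routine.
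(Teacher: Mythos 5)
Your secular equation $R_1R_2\,e^{2i\ell z/h}=1$ and the log form you build from it coincide with the paper's \eqref{e:2dreseq} and \eqref{e:2dlogeq}, and your leading-order extraction of \eqref{e:2deltaexp} is the same. Where you diverge is in the localization: the paper applies Rouch\'e's theorem once, on the fixed annulus $A=\{1/4\le|z|\le 4\}$, with $f(z)=z-\pi hk$ and $g(z)=\frac{ih}{2\ell}\log(R_1R_2)$, while you set up a contraction $\Phi_k$ on an $O(h)$-disk $D_k$ around $z_k^{(0)}$. The contraction step is fine (your formula $\bigl(\log(\cdots)\bigr)'=(2+R_1+R_2)/z$ is correct and gives $\Phi_k'=O(h)$), and buys you an explicit iteration scheme for the refined asymptotics. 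However, the ``completeness'' step as you describe it is not yet a proof: the half-annulus $\{1/2\le|z|\le 2,\ \Re z>0\}$ is not tiled by rectangles of width $\pi h/\ell$, and the lower bound $|F|\gtrsim 1$ on rectangle boundaries requires real work near the resonance strip $\Im z\approx -\tfrac{\beta_1+\beta_2}{2\ell}h\log(1/h)$, where both terms of $F$ are of size $1$ and the claim hinges on phase separation. You can avoid the argument principle entirely: every resonance $z$ in the region satisfies the log form for exactly one integer $k$, and \eqref{e:logr1r2} forces $z$ to lie in $D_k$, so uniqueness of the contraction fixed point already gives completeness. This is essentially what the paper's Rouch\'e argument achieves in one step.

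Your derivation of the Heaviside contribution is internally consistent but does not match the theorem as stated, and you do not flag the mismatch. The $i\pi$ branch contribution from $\log$ of a negative real number, multiplied by $\frac{h}{2i\ell}$, gives a real shift of $\frac{\pi h}{2\ell}=\frac{\pi h}{\ell}\cdot\frac12$, whereas \eqref{e:2deltarefre} asserts $\frac{\pi h}{\ell}\cdot\frac14$. (Carrying out the paper's own insertion of the refined $\log(R_1R_2)$ into \eqref{e:2dlogeq} also yields $\tfrac12$, so the $\tfrac14$ in the statement looks like a misprint --- but you should have noticed you were off by a factor of two from the formula you claim to reproduce, rather than asserting your computation ``produces the claimed real shift.'') The rest of your refined expansion --- the $O(h^{\beta_j})$ corrections from $\log(1-\tV_j)$, the replacement of $z$ by $\pi hk/\ell$ inside the logarithm with an $O(h\log(1/h))\subset O(h^\delta)$ error, and the resulting form of $\Im z_k$ --- agrees with the paper.
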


\begin{proof}
In this case we have
\[
 y_1^+ = R_1 e^{i\ell z/h} y_1^-, \qquad y_1^- = R_2 e^{i\ell z/h} y_1^+,
\]
and so resonances occur if and only if
\begin{equation}\label{e:2dreseq}
 e^{-2i\ell z/h} = R_1R_2.
\end{equation}
Take the logarithm of both sides of \eqref{e:2dreseq} and multiply through by $ih/2\ell$ to get
\begin{equation}\label{e:2dlogeq}
 z = \frac {ih}{2\ell} \log (R_1R_2) + \frac{\pi h k}{\ell},
\end{equation}
where $k$ is an integer. Substituting
\begin{equation}\label{e:logr1r2}
 \log (R_1R_2) = \log \Big( \frac{-C_1C_2 h^{\beta_1 + \beta_2}}{4z^2(1-\tV_1)(1-\tV_2)}  \Big) = -(\beta_1+\beta_2)  \log(1/h) + O(1),
\end{equation}
into \eqref{e:2dlogeq} gives
\[
 z = \frac{\pi h k}{\ell} - i \frac {\beta_1+\beta_2}{2\ell} h \log (1/h) + O(h).
\]
It is clear that if $\pi h k \le \ell/3$ or $\pi h k \ge 3 \ell$ then the right hand side is not in $\{z \in \mathbb C \colon \Re z > 0 \text{ and } 1/2 \le |z| \le 2\}$ for $h$ small.  Hence, to establish \eqref{e:2deltaexp}, is enough to prove that, for $h$ small enough, if $k$ is such that  $\ell/3 \le \pi h k \le 3\ell$, then \eqref{e:2dlogeq} has a unique solution $z$ in the half-annulus $A = \{z \in \mathbb C \colon \Re z \ge 0 \text{ and } 1/4 \le |z| \le 4\}$.

For this we apply Rouch\'e's theorem (the Corollary of Section 5.2 of \cite{Ahlfors}) with $f(z) = z - \tfrac 1 \ell \pi h k$ and $g(z) = \frac {ih}{2\ell} \log (R_1R_2) $  on the half-annulus $A$ (note that $g$ is analytic on $A$ by Corollary 2 of Section 4.4 of \cite{Ahlfors}). Since $f(z) = 0$ obviously has a unique solution in $A$, it is enough to check that $|g(z)| < |f(z)|$ on $\partial A$.  For that, note that on $\partial A$ we have $|f(z)| \ge 1/12$ and use \eqref{e:logr1r2}.

Finally, to get \eqref{e:2deltarefre} and \eqref{e:2deltarefim}, note
that \eqref{e:2dreseq}, \eqref{e:logr1r2} yield
$z^2 = \frac{\pi^2 h^2 k^2}{\ell^2} + O(h \log(1/h));$ since
$\tV_j = O(h^{\beta_j}),$
\begin{align*}
& \log(R_1R_2) = \log \Big(\frac{-C_1C_2 h^{\beta_1 + \beta_2}}{4z^2(1-\tV_1)(1-\tV_2)}  \Big)  \\
 & \hspace{2cm} = -(\beta_1+\beta_2)  \log(1/h) + \log \Big( \frac{-C_1C_2 \ell^2}{4\pi^2h^2k^2}  \Big) + O(h^\delta).
\end{align*}
Inserting this into \eqref{e:2dlogeq} yields \eqref{e:2deltarefre}, \eqref{e:2deltarefim}.
\end{proof}

\subsection{Three deltas}

For $N=3$ we use $w=e^{-iz/h}$ and write
\[
 y_2^- = R_3 w^{-\ell_2} y_2^+, \qquad y_1^+ = R_1 w^{-\ell_1} y_1^-,
\]
and plugging those into the equations for $y_2^+$ and $y_1^-$ gives
\[\begin{split}
 y_2^+ &= R_1 T_2 w^{-2\ell_1} y_1^- + R_2 R_3w^{-2\ell_2} y_2^+, \\
y_1^- &= R_1R_2 w^{-2\ell_1}y_1^-  + T_2R_3 w^{-2\ell_2}y_2^+.
\end{split}\]

These equations have a nontrivial solution if and only if
\[R_1R_2^2R_3 w^{-2\ell_1-2\ell_2} - R_1R_2 w^{-2\ell_1} - R_1T_2^2R_3 w^{-2\ell_1-2\ell_2} - R_2R_3 w^{-2\ell_2}+1 = 0.
\]
Since $T_2 = R_2 + 1$, another way to write this is
\begin{equation}\label{e:3dreseq}
 w^{2\ell_1+2\ell_2} - R_1R_2 w^{2\ell_2} - R_2 R_3 w^{2\ell_1} - R_1(1+2R_2)R_3 = 0.
\end{equation}

If the delta functions are equally spaced, this can be solved using the quadratic formula and works out similarly to the case of two deltas.

\begin{theorem}\label{t:3delta=}
 If $\ell_1=\ell_2 =\ell$, then there are positive real numbers
 $\gamma_+$ and $\gamma_-$ (which may or may not be distinct,
 depending on $\ell, \ \beta_1, \ \beta_2, \ \beta_3$), such that all
 resonances obeying $1/2 \le |z| \le 2$ and $\operatorname{Re} z > 0$
 are given by
\[
 z^+_k = \frac{\pi h k}{\ell} - i \gamma_+ h\log (1/h) + O(h), \qquad  z^-_k = \frac{\pi h k}{\ell} - i \gamma_- h\log (1/h) + O(h),
\]
for some positive integers $k$.
\end{theorem}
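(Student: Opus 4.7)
The plan is to mirror the proof of Theorem~\ref{t:2delta}. With $\ell_1=\ell_2=\ell$, the secular equation \eqref{e:3dreseq} becomes a quadratic in $W := w^{2\ell} = e^{-2i\ell z/h}$, namely $W^2 - BW - C = 0$, where
\[
 B := R_2(R_1+R_3), \qquad C := R_1(1+2R_2)R_3.
\]
The two roots $u_\pm(z,h) = \tfrac12\bigl(B \pm \sqrt{B^2+4C}\bigr)$ split the resonance condition into two branches $e^{-2i\ell z/h} = u_\pm(z,h)$, and taking logarithms as in \eqref{e:2dlogeq} produces
\[
 z = \frac{ih}{2\ell}\, \log u_\pm(z,h) + \frac{\pi h k}{\ell}, \qquad k \in \mathbb{Z}.
\]

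The key step is to show that each $|u_\pm|$ has polynomial order in $h$ with a strictly positive exponent. From $R_j = C_j h^{\beta_j}/(2iz) + O(h^{2\beta_j})$ one reads off $|B| \sim h^{\beta_2+\beta_{\min}}$ and $|C| \sim h^{\beta_1+\beta_3}$ on $|z|\in[1/4,4]$, where $\beta_{\min} := \min(\beta_1,\beta_3)$ and $\beta_{\max} := \max(\beta_1,\beta_3)$ (the leading coefficient of $C$ is nonzero since $C_1 C_3 \ne 0$). Because $u_+ u_- = -C$ and $u_+ + u_- = B$ both tend to $0$ with $h$, so do both $|u_\pm|$, which immediately forces $\gamma_\pm > 0$. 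A direct comparison of $|B|^2$ with $|C|$ yields
\[
 \{2\ell\gamma_+,\, 2\ell\gamma_-\} =
 \begin{cases} \{\beta_2+\beta_{\min},\, \beta_{\max}-\beta_2\}, & \text{if } 2\beta_2 < \beta_{\max}-\beta_{\min},\\ \{(\beta_1+\beta_3)/2,\, (\beta_1+\beta_3)/2\}, & \text{if } 2\beta_2 \ge \beta_{\max}-\beta_{\min},\end{cases}
\]
both entries strictly positive in either case.

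The third step is a direct application of Rouch\'e's theorem on the annulus $A=\{1/4\le|z|\le 4\}$: with $f(z) := z - \pi h k/\ell$ and $g(z) := -(ih/(2\ell))\log u_\pm(z,h) = O(h\log(1/h))$, the inequality $|g|<|f|$ on $\partial A$ gives a unique zero of $f+g$ in $A$ for each $(\pm,k)$ in the admissible range $\ell/3 \le \pi h k \le 3\ell$. Conversely every resonance in $\{|z|\in[1/2,2],\,\Re z>0\}$ must arise from one of the two branches and some such $k$.

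The main obstacle is to ensure that $u_\pm(z,h)$ is well defined as a single-valued holomorphic function of $z$ on $A$ --- i.e.\ that $B^2+4C$ does not vanish there --- and that the estimate on $\log u_\pm$ is uniform in $z \in \partial A$. Outside the borderline case $2\beta_2 = \beta_{\max}-\beta_{\min}$ this is automatic, because one of $B^2$ and $4C$ strictly dominates the other. In the borderline regime one may instead treat the pair of branches jointly through its symmetric functions $(B,-C)$, producing the two families of resonances simultaneously rather than separately, or verify by inspection that the leading discriminant is nonvanishing on $|z|\in[1/4,4]$.
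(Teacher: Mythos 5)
Your proof follows the same strategy as the paper's: factor the resonance equation \eqref{e:3dreseq} with $\ell_1=\ell_2=\ell$ as a quadratic in $W=w^{2\ell}$, extract roots $u_\pm = r_\pm$, take logarithms, and run the Rouch\'e argument from Theorem~\ref{t:2delta} on each branch. The paper's proof is terser --- it asserts $\log r_\pm = -\gamma_\pm\ell\log(1/h)+O(1)$ with $\gamma_\pm>0$ ``by the same argument'' and then illustrates the dichotomy with examples, while you fill in the quantitative size estimates of $B$, $C$, $u_\pm$ and the explicit dichotomy for $\gamma_\pm$, which the paper leaves implicit or defers to Theorem~\ref{t:3deltagen}.

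Two small remarks. First, the assertion $|B|\sim h^{\beta_2+\beta_{\min}}$ is not quite unconditional: if $\beta_1=\beta_3$ and $C_1+C_3=0$, cancellation in $R_1+R_3$ makes $B$ smaller than this. Fortunately the split regime $2\beta_2<\beta_{\max}-\beta_{\min}$ forces $\beta_1\ne\beta_3$, so no cancellation can occur there, and in the balanced regime the actual size of $B$ is irrelevant since $C$ dominates the discriminant; thus your formulas for $\gamma_\pm$ are in fact correct, but the reason is worth spelling out. Second, you are right to flag the single-valuedness of the square root as a genuine subtlety that the paper does not address explicitly; your suggested remedy in the borderline case $2\beta_2=\beta_{\max}-\beta_{\min}$ (working with the symmetric functions or verifying non-vanishing of the discriminant) is the right idea, though as written it is more of a sketch than a finished argument. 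Since the theorem only asserts the \emph{existence} of positive $\gamma_\pm$ rather than specific values, the borderline case does not threaten the conclusion, but a fully rigorous write-up would need to resolve it.

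Overall: same approach as the paper, more detailed, essentially correct.
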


Simple explicit formulas for the $\gamma_\pm$ can be obtained either by elaborating the calulation in the proof of Theorem \ref{t:3delta=} (which is a more complicated version of the one in Theorem \ref{t:2delta}), or as special cases of the ones in Theorem \ref{t:3deltagen} below.  More precise asymptotics for the real and imaginary parts of $z_k^\pm$, as in \eqref{e:2deltarefre} and \eqref{e:2deltarefim}, follow as in the proof of Theorem \ref{t:2delta}.

\begin{proof}
By the quadratic formula, \eqref{e:3dreseq} is equivalent to 
\[
( w^{2\ell} - r_-)(w^{2\ell} - r_+) = 0,
\]
where
\[
 r_{\pm} = \frac 12 \Big( (R_1+R_3)R_2 \pm \sqrt{ (R_1+R_3)^2R_2^2 + 4 R_1(1+2R_2)R_3}\ \Big).
\]
By the same argument as in the proof of Theorem \ref{t:2delta} we get strings of resonances
%% JW fixing typo here late in the game (8/24/23)
\[
 z^\pm_k = \frac{\pi h k}{\ell} + \frac {ih}{2\ell} \log r_\pm,
\]
where $\log r_\pm = - \gamma_{\pm} \ell \log(1/h) + O(1)$ for some $\gamma_\pm > 0$. 

There are various ways to choose the $\beta_j$ so as to make either  $\gamma_+ \ne \gamma_-$ or  $\gamma_+ = \gamma_-$. For example, if $\beta_1 + 2 \beta_2 < \beta_3$, then $R_3 = O(R_1R_2^2h^\delta)$ for some $\delta>0$, and thus $r_\pm = \frac 12(R_1 R_2 \pm R_1 R_2 + O(R_1R_2h^\delta)),$ and  $\gamma_+ \ne \gamma_-$. 

On the other hand, if for example $\beta_1 < \beta_3 < \beta_1 + 2 \beta_2$, then we get $r_{\pm} \sim \pm
\sqrt{R_1R_3}$ and hence $\gamma_+ = \gamma_-$. 
\end{proof}

Our next theorem gives necessary conditions on the logarithmic
curves the resonances can approach when $\ell_1$ is not necessarily equal to $\ell_2$.

\begin{theorem}\label{t:3deltagen}
  Let $\ell_1, \ \ell_2, \ \beta_1, \ \beta_2, \ \beta_3$ be
  given. Let $h_1, \ h_2, \ \dots$ be a sequence of positive numbers
  tending to $0$. Let $z = z(h_j)$ be a sequence of resonances such
  that $z = h^{o(1)}$ (i.e. such that $z(h_j) = e^{f(h_j)}$ for some $f \colon (0,h_1] \to \mathbb C$ obeying $|f(h)| = o(\log (1/h))$) and $\Im z \ge - M h \log(1/h) $ for some
  positive $M$. Then this sequence has a subsequence such that
\begin{equation}\label{e:3dimzgamma}
 \frac{\Im z}{h \log(1/h)} \to - \gamma, 
\end{equation}
for some $\gamma \in \{\gamma_+, \gamma_-\}$, where $\gamma_+$ and $\gamma_-$ are determined as follows:

\begin{enumerate}
 \item If $\beta_3\ell_1 - \beta_2\ell_1 - \beta_2\ell_2 \le  \beta_1 \ell_2 \le  \beta_2\ell_1 + \beta_2\ell_2 + \beta_3\ell_1$, then
\[
 \gamma_+ = \gamma_- = \frac{\beta_1+\beta_3}{2\ell_1+2\ell_2}.
\]
% \item If $\beta_1 \ell_2 + \beta_2 \ell_1 + \beta_2 \ell_2 < \beta_3 \ell_1$, then
% \[
%  \gamma_+ = \frac{\beta_3 - \beta_2}{2\ell_2} > \gamma_- = \frac{\beta_1 + \beta_2}{2\ell_1}.
% \]
\item If 
$\beta_3\ell_1 - \beta_2\ell_1 - \beta_2\ell_2 >  \beta_1 \ell_2$,
 then
 \[
  \gamma_+ = \frac{\beta_3 - \beta_2}{2\ell_2} > \gamma_- = \frac{\beta_1 + \beta_2}{2\ell_1}.
 \]
\item If $\beta_2\ell_1 + \beta_2\ell_2 + \beta_3\ell_1 < \beta_1 \ell_2$, then
\[
 \gamma_+ = \frac{\beta_1 - \beta_2}{2\ell_1} > \gamma_- = \frac{\beta_2 + \beta_3}{2\ell_2}.
\]
\end{enumerate}
\end{theorem}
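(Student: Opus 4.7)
The plan is to apply the Newton polygon (dominant-balance) method to the secular equation \eqref{e:3dreseq},
\[
 w^{2(\ell_1+\ell_2)} - R_1R_2 w^{2\ell_2} - R_2 R_3 w^{2\ell_1} - R_1(1+2R_2)R_3 = 0,
\]
viewed as a sum of four monomials whose moduli are power-like in $h$. After passing to a subsequence (using compactness of $[0,M]$), I may assume $\gamma := -\Im z/(h\log(1/h))$ converges to some $\gamma_0 \in [0,M]$. Under the hypothesis $z = h^{o(1)}$, we have $|w| = e^{\Im z/h} = h^{\gamma+o(1)}$, and $R_j = (C_j/(2iz))h^{\beta_j}(1+o(1))$ has modulus $h^{\beta_j + o(1)}$, while $1+2R_2 = 1+o(1)$. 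Thus the four terms of \eqref{e:3dreseq} have moduli $h^{e_i(\gamma)+o(1)}$ with
\[
 e_1 = 2(\ell_1+\ell_2)\gamma, \quad e_2 = \beta_1+\beta_2+2\ell_2\gamma, \quad e_3 = \beta_2+\beta_3+2\ell_1\gamma, \quad e_4 = \beta_1+\beta_3.
\]

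The guiding principle is that if at $\gamma_0$ the minimum of the $e_i(\gamma_0)$ is achieved by a single index $k$, then along the subsequence that term dominates the other three by a factor $h^{-c}$ with $c > 0$, so the sum cannot vanish; hence at $\gamma_0$ the minimum must be attained by at least two indices simultaneously. The lines $\gamma \mapsto e_i(\gamma)$ are affine, so the lower convex envelope has only finitely many vertices, located at pairwise intersection points. Because $\beta_2 > 0$, the pair $\{e_2,e_3\}$ cannot be jointly minimizing (such a balance would force $(\beta_2+\beta_3)/(2\ell_2) \leq \gamma \leq (\beta_3 - \beta_2)/(2\ell_2)$, which is impossible); the only remaining candidate values for $\gamma_0$ are
\[
 \tfrac{\beta_1+\beta_2}{2\ell_1},\ \tfrac{\beta_2+\beta_3}{2\ell_2},\ \tfrac{\beta_1+\beta_3}{2(\ell_1+\ell_2)},\ \tfrac{\beta_3-\beta_2}{2\ell_2},\ \tfrac{\beta_1-\beta_2}{2\ell_1}.
\]

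Finally, I determine which of these candidates actually lie on the lower envelope under each of the three hypotheses. In case (1), the given double inequality is precisely the pair of conditions ensuring that $e_1$ meets $e_4$ on or below both $e_2$ and $e_3$, so the envelope has the unique vertex $\gamma_\pm = (\beta_1+\beta_3)/(2(\ell_1+\ell_2))$. In case (2), the strict inequality $\beta_3\ell_1 > \beta_1\ell_2 + \beta_2(\ell_1+\ell_2)$ forces $e_1$ to cross $e_2$ strictly below $e_4$, yielding the vertex $\gamma_- = (\beta_1+\beta_2)/(2\ell_1)$; continuing upward in $\gamma$, $e_2$ then meets $e_4$ at $\gamma_+ = (\beta_3-\beta_2)/(2\ell_2)$, while $e_3$ remains strictly above throughout, as a direct substitution of the case hypothesis verifies. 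Case (3) is obtained from case (2) by the evident symmetry $1 \leftrightarrow 3$, $\ell_1 \leftrightarrow \ell_2$.

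The main obstacle is the systematic bookkeeping of pairwise inequalities to identify the envelope vertices in each regime and to rule out the spurious candidates. A minor but necessary check is that the $h^{o(1)}$ prefactors from $1/z$ and $1/(1-\tV_j)$ never disturb the dominant-balance argument; this holds because they are bounded above and below by powers $h^{\pm o(1)}$. Note that we need only the \emph{necessity} of the balance condition to establish the theorem, which asserts only that any limiting $\gamma$ lies in the stated finite set.
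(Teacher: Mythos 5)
Your proposal is correct and takes a genuinely different route from the paper's own proof of Theorem~\ref{t:3deltagen}.  The paper proves this theorem by returning to the linear system for $(y_1^-,y_2^+)$ and splitting by hand into cases according to which terms in \eqref{e:y2+=} and \eqref{e:y1-=} dominate, under the WLOG simplification $\beta_3\ell_1 \le \beta_1\ell_2$.  You instead apply the Newton-polygon / dominant-balance argument directly to the four monomials of the secular equation \eqref{e:3dreseq}, computing the affine exponent functions $e_1,\dots,e_4$ and identifying the slopes of the lower convex envelope.  This is exactly the machinery the paper develops afterward (Lemma~\ref{lemma:newton} and Theorem~\ref{t:Ndelta}), and in fact the paper's own remark after Theorem~\ref{t:Ndelta} carries out precisely your computation for $N=3$ (observing that the vertex $(\beta_1+\beta_2,2\ell_2)$ can never appear on the polygon under the assumption $\beta_3\ell_1\le\beta_1\ell_2$, equivalently that the pair $\{e_2,e_3\}$ cannot be jointly minimizing).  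So your argument is essentially a self-contained rendering of that remark, promoted to a proof of the theorem.  What the paper's route via the linear system buys is a more concrete picture of which terms are balancing physically (reflection off the middle barrier vs.\ transmission through it), at the cost of more bookkeeping; what your route buys is uniformity with the $N$-pole analysis and a more transparent reason why exactly those finitely many $\gamma$-values arise.

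One small gap worth patching: you pass to a subsequence ``using compactness of $[0,M]$,'' but a priori the normalized quantity $-\Im z/(h\log(1/h))$ only lies in $[-\infty,M]$ (there is no lower bound given).  You should first observe, as the paper does, that if the subsequential limit $\gamma_0$ were $\le 0$ (or $-\infty$), then $e_1(\gamma_0)$ is the \emph{strict} minimum of the four exponents (indeed $e_1\le 0 < e_2,e_3,e_4$), so the leading term $w^{2(\ell_1+\ell_2)}$ dominates and \eqref{e:3dreseq} cannot hold.  That forces $\gamma_0>0$, after which your compactness-and-envelope argument proceeds exactly as stated.  The rest of the case checking --- that the double inequality in case (1) is equivalent to $e_2(\gamma^*),e_3(\gamma^*)\ge e_1(\gamma^*)=e_4(\gamma^*)$ at the crossing $\gamma^*=(\beta_1+\beta_3)/(2\ell_1+2\ell_2)$, and that in case (2) the strict inequality forces the envelope through $e_1\to e_2\to e_4$ with $e_3$ strictly above --- I verified explicitly and it all works out.
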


\begin{remark}\label{rem:simpler}
  Note that because the resonances of $ - h^2 \partial_x^2 +  V(-x)$ are the same as the resonances of  $ - h^2 \partial_x^2 +  V(x)$, it is no loss of generality to make the simplifying assumption
\begin{equation}\label{e:b3l1b1l2}
\beta_3\ell_1 \le \beta_1\ell_2.
\end{equation}
Then the three cases of the theorem reduce to the following two:
\begin{enumerate}
 \item If $ \beta_1 \ell_2  \le  \beta_2\ell_1 + \beta_2\ell_2 + \beta_3\ell_1$, then
\[
 \gamma_+ = \gamma_- = \frac{\beta_1+\beta_3}{2\ell_1+2\ell_2}.
\]
\item If $\beta_2\ell_1 + \beta_2\ell_2 + \beta_3\ell_1 < \beta_1 \ell_2$, then
\[
 \gamma_+ = \frac{\beta_1 - \beta_2}{2\ell_1} > \gamma_- = \frac{\beta_2 + \beta_3}{2\ell_2}.
\]
\end{enumerate}
One can interpret (2) as corresponding to the case in which the middle delta is strong enough to split the interval $(x_1,x_3)$ at $x_2$, and  (1) as corresponding to the case in which it is not.
\end{remark}

Note also that in each of the limiting situations $\beta_1 \to \infty$
or $\beta_2 \to \infty$, which each correspond to one of the delta
functions becoming vanishingly small, the resonances converge to those
of a two-delta problem as in Theorem \ref{t:2delta}.

\begin{proof}
As noted above, we may without loss of generality proceed under
  the assumption \eqref{e:b3l1b1l2}, and show the simpler version of
  the theorem in Remark~\ref{rem:simpler}.

After passing to a subsequence, we have $\Im z /h \log(1/h) \to - \gamma$ for some $\gamma \in [-\infty,M]$, and so $w = h^{\gamma+o(1)}$.  By the reflection coefficient formulas \eqref{Vtilde} and \eqref{TR} and using $z = h^{o(1)}$, we have $\tV_j = \frac {C_j}{2i} h^{\beta_j + o(1)}$ and hence $ R_j = h^{\beta_j + o(1)},$
and thus the resonance equation \eqref{e:3dreseq} implies that $ \gamma > 0$. 

Next, we eliminate $y_1^-$ and $y_2^+$ from \eqref{eq1} and \eqref{eq2}, the equations for the $y_j^\pm$, by writing
\[
 y_2^- = R_3 w^{-\ell_2} y_2^+, \qquad y_1^+ = R_1 w^{-\ell_1} y_1^-,
\]
which gives
\[\begin{split}
 y_2^+ &= R_1 T_2 w^{-2\ell_1} y_1^- + R_2 R_3w^{-2\ell_2} y_2^+, \\
y_1^- &= R_1R_2 w^{-2\ell_1}y_1^-  + T_2R_3 w^{-2\ell_2}y_2^+.
\end{split}\]
We now substitute $w =   h^{\gamma + o(1)}$, $T_j = h^{o(1)}$, and $R_j =   h^{\beta_j + o(1)}$.

That gives
\begin{align}
\label{e:y2+=} y_2^+ &=   h^{\beta_1-2\ell_1\gamma + o(1)} y_1^- +  h^{\beta_2 + \beta_3-2\ell_2\gamma + o(1)}  y_2^+, \\
\label{e:y1-=}y_1^- &=  h^{\beta_1 + \beta_2 -2\ell_1\gamma + o(1)}y_1^-  + h^{\beta_3-2\ell_2\gamma + o(1)}y_2^+.
\end{align}
We now consider three cases according to whether the $y_2^+$ terms on the left and on the right of \eqref{e:y2+=} have comparable sizes or whether one dominates the other.

Case I. If the sizes are comparable,
i.e., if \begin{equation}\label{cond1}\beta_2 + \beta_3 - 2 \ell_2
  \gamma = 0\end{equation} then we use \eqref{e:3dreseq}, and observe
that the $w^{2\ell_1 + 2\ell_2}$ and $R_2R_3 w^{2\ell_1}$ terms both equal $h^{ \beta_2+\beta_3 + 2 \ell_1 \gamma+ o(1)}$, 
$R_1R_2 w^{2\ell_2}=h^{\beta_1+2\beta_2 + \beta_3+ o(1)}$, and  $R_1(1+2R_2)R_3=h^{\beta_1+\beta_3+ o(1)}$. So the $w^{2\ell_1 + 2\ell_2}$
and $R_2R_3 w^{2\ell_1}$ terms need to be at least as big as the
$R_1(1+2R_2)R_3$ term, and they need to cancel one another; the former
condition means we need
$\beta_2 + \beta_3 + 2\ell_1 \gamma \le \beta_1 + \beta_3$, i.e., by \eqref{cond1},
$\beta_2 \ell_2 + \beta_2\ell_1 + \beta_3\ell_1 \le \beta_1\ell_2$.

Case II. If the term on the right is dominant, i.e., if $\beta_2+\beta_3 - 2\ell_2\gamma<0$, then \eqref{e:y2+=} becomes
\[
 y_2^+ =  h^{\beta_1 - \beta_2 - \beta_3 + 2 \ell_2 \gamma - 2 \ell_1 \gamma + o(1)}y_1^-,
\]
which, inserted into \eqref{e:y1-=}, gives
\[
y_1^- =  h^{\beta_1 + \beta_2 -2\ell_1\gamma + o(1)}y_1^-  +  h^{\beta_1 - \beta_2-2\ell_1\gamma + o(1)}y_1^-  = h^{\beta_1 - \beta_2-2\ell_1\gamma + o(1)}y_1^-.
\]
Hence  $0=\beta_1-\beta_2 - 2 \ell_1 \gamma$, or $\gamma = \frac{\beta_1-\beta_2}{2\ell_1}$. This requires $(\beta_2+\beta_3)\ell_1 < (\beta_1-\beta_2)\ell_2$.

Case III: If the term on the left is dominant, i.e., if  $\beta_2+\beta_3 - 2\ell_2\gamma>0$, then \eqref{e:y2+=} becomes
\[
 y_2^+ =  h^{\beta_1-2\ell_1\gamma + o(1)} y_1^-,
\]
which, inserted into \eqref{e:y1-=}, gives
\[
 y_1^- =   h^{\beta_1 + \beta_2 -2\ell_1\gamma + o(1)}y_1^-  + h^{\beta_1 + \beta_3-2\ell_1 \gamma - 2\ell_2\gamma + o(1)}y_1^- .
\]
Of these three terms, two must be of the same size and the other must be no bigger. We accordingly have three subcases.

Subcase 1: If the term on the left is the small one, then $\beta_1+\beta_2 - 2 \ell_1 \gamma = \beta_1 + \beta_3 - 2 \ell_1 \gamma - 2 \ell _2 \gamma \le 0$. That means $\gamma = \frac{\beta_3 - \beta_2}{2\ell_2}$ and we require $\beta_1 \ell_2 + \beta_2 \ell_2 + \beta_2 \ell_1 \le \beta_3 \ell_1$. This contradicts \eqref{e:b3l1b1l2}.

Subcase 2: If the first term on the right is the small one, then $0 = \beta_1 + \beta_3 - 2 \ell_1 \gamma - 2 \ell _2 \gamma \le \beta_1 + \beta _2 - 2 \ell_1 \gamma$. That means $\gamma = \frac {\beta_1+\beta_3}{2\ell_1+2\ell_2}$, and then we need $(\beta_2 + \beta_3)(\ell_1+\ell_2) > (\beta_1+\beta_3) \ell_2  \ge (\beta_3 - \beta_2)(\ell_1+\ell_2)$ which is equivalent to $\beta_2\ell_1 + \beta_2\ell_2 + \beta_3\ell_1 > \beta_1 \ell_2  \ge \beta_3\ell_1 - \beta_2\ell_1 - \beta_2\ell_2$.

Subcase 3: If the second term on the right is the small one, then $0 = \beta_1 + \beta _2 - 2 \ell_1 \gamma \le \beta_1 + \beta_3 - 2 \ell_1 \gamma - 2 \ell _2 \gamma$. That means $\gamma = \frac{\beta_1+\beta_2}{2\ell_1}$ and we  require $\ell_2 (\beta_1+\beta_2) \le (\beta_3 - \beta_2)\ell_1$. This contradicts \eqref{e:b3l1b1l2}.

In summary, under the assumption \eqref{e:b3l1b1l2}, we have three possible values of $\gamma$, each with a corresponding necessary condition on the coefficients:
\begin{itemize}
\item If  $\gamma = \frac {\beta_2 + \beta_3}{2 \ell_2}$, then $\beta_2 \ell_1 + \beta_2\ell_2 + \beta_3\ell_1 \le \beta_1\ell_2$.
\item If $\gamma = \frac{\beta_1 - \beta_2}{2\ell_1}$, then $\beta_2 \ell_1 + \beta_2 \ell_2 + \beta_3 \ell_1 < \beta_1 \ell_2$.
\item If $\gamma = \frac {\beta_1+\beta_3}{2\ell_1+2\ell_2}$, then $\beta_1 \ell_2  <  \beta_2\ell_1 + \beta_2\ell_2 + \beta_3\ell_1$.
\end{itemize}
The conclusions of the theorem follow from these.
\end{proof}

%\begin{table}[ht]
%\centering
% \begin{tabular}{| c | c |}
%\hline & \\
% Possible value of $\gamma$ & Corresponding necessary condition
%\\ & \\  \hline  & \\
% $\gamma = \frac {\beta_2 + \beta_3}{2 \ell_2}$ & $\beta_2 \ell_1 + \beta_2\ell_2 + \beta_3\ell_1 \le \beta_1\ell_2$
%\\ & \\  \hline  & \\
%$\gamma = \frac{\beta_1 - \beta_2}{2\ell_1}$ & $\beta_2 \ell_1 + \beta_2 \ell_2 + \beta_3 \ell_1 < \beta_1 \ell_2$ 
%\\ & \\  \hline  & \\
%$\gamma = \frac{\beta_3 - \beta_2}{2\ell_2}$ & $\beta_1 \ell_2 + \beta_2 \ell_1 + \beta_2 \ell_2 \le \beta_3 \ell_1$
%\\ & \\  \hline  & \\
%$\gamma = \frac {\beta_1+\beta_3}{2\ell_1+2\ell_2}$ & $ %\beta_3\ell_1 - \beta_2\ell_1 - \beta_2\ell_2 \le  
%\beta_1 \ell_2  <  \beta_2\ell_1 + \beta_2\ell_2 + \beta_3\ell_1$
%\\ & \\  \hline  & \\
%$\gamma = \frac{\beta_1+\beta_2}{2\ell_1}$ &  $\beta_1 \ell_2 + \beta_2 \ell_1 + \beta_2 \ell_2 \le \beta_3 \ell_1$
%\\ & \\  \hline
%\end{tabular}
%\bigskip
%\caption{The possible values of $\gamma$, under the assumption \eqref{e:b3l1b1l2}, with corresponding necessary conditions on the coefficients.}\label{t:gammabeta}
%\end{table}

\section{$N$ deltas}

In this section we generalize the observations in the special cases of
two and three delta-poles to the general case of $N$ poles: the main
tool, as before, is simply examination of the leading terms in the
(generally transcendental) equations determining their location.  To
aid in understanding those terms, we begin by introducing the
machinery of Newton polygons, a traditional tool in the study of
resolution of plane
algebraic curves which also applies in the setting studied here.

\subsection{Newton polygons}

Here we explore how Newton polygons apply to the analysis of equations
of a form generalizing e.g.\ \eqref{e:3dreseq} which (as we will show
below) arise in the study of the more general case.
In particular, let
  $$p(h,w) =\sum_{j=0}^N  h^{{\hexpt}_j + o(1)} w^{\wexpt_j},$$
  where all exponents ${\hexpt}_j,$ $\wexpt_j$ are real and
  nonnegative.  Note, we take the polygon to contain the semi-infinite horizontal
    and vertical segments, which seems to differ slightly from standard conventions in the Newton Polygon literature.
    
  \begin{definition}
The Newton polygon is of $p$ is the boundary of the convex hull of the union of the first quadrants displaced to have vertices at the points $({\hexpt}_j, \wexpt_j)$: 
$$
\pa \conv \bigcup_{j=0}^N (({\hexpt}_j, \wexpt_j)+ [0,\infty)^2).
$$
\end{definition}
See Figure \ref{fig:newton} for an example, and see \cite[Section 8.3]{BrKn:86} for the classical algebro-geometric
theory of Newton polygons.

\begin{lemma}\label{lemma:newton}
Consider the equation
\begin{equation}\label{e:phw}
p(h,w) \equiv \sum_{j=0}^N h^{{\hexpt}_j + o(1)} w^{\wexpt_j}=0,
\end{equation}
  where all exponents ${\hexpt}_j,$ $\wexpt_j$ are real and
  nonnegative. Suppose that, for all $j \ge 1$, we have $\hexpt_j > \hexpt_0$ and $\wexpt_j < \wexpt_0$.
Fix any $M>0.$ Then any sequence of
  roots of the equation $w=w(h)$ for $h \in (0,1)$ with
  $\abs{w}^{-1}=O(h^{-M})$ has a
  subsequence that asymptotically satisfies (as $h \downarrow 0$)
  $$
\log \abs{w} \sim {\gamma} \log h
$$
where $-1/\gamma$ is one of the finitely  many nonzero slopes occurring in the Newton polygon of $p.$    \end{lemma}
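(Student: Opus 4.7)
The plan is to pass to a subsequence along which $\gamma_n := \log|w(h_n)|/\log h_n$ converges to some limit $\gamma$, show that $\gamma$ must be a corner of the piecewise-linear concave function $\phi(\gamma) := \min_{0 \le j \le N}(\nu_j + \gamma\lambda_j)$ (so that at least two terms of $p$ have comparable dominant magnitude and have a chance to cancel), and then identify $-1/\gamma$ as the slope of the Newton-polygon edge on which this tropical minimum is attained. Throughout, write $f_j(h,w) := h^{\nu_j + o(1)} w^{\lambda_j}$ for the $j$-th summand, so that substituting $|w(h_n)| = h_n^{\gamma_n}$ yields $\log|f_j(h_n,w(h_n))|/\log h_n = \nu_j + \gamma_n \lambda_j + o(1)$.

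The hypothesis $|w|^{-1} = O(h^{-M})$ translates directly to $\gamma_n \le M + o(1)$, so after passing to a subsequence we may assume $\gamma_n \to \gamma \in [-\infty, M]$. First I would rule out $\gamma = -\infty$: because $\lambda_0$ is strictly the largest of the $\lambda_j$, for each $j \ge 1$ the difference $(\nu_j - \nu_0) + \gamma_n(\lambda_j - \lambda_0) \to +\infty$, so $|f_j/f_0| \to 0$ and hence $|p| \ge |f_0|/2 > 0$ for large $n$, contradicting $p(h_n,w(h_n)) = 0$. The same argument, using $(\nu_j - \nu_0) > 0$ and $\gamma(\lambda_j - \lambda_0) \ge 0$ when $\gamma \le 0$, shows $\gamma \le 0$ is likewise impossible.

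Next, set $J(\gamma) := \{j : \nu_j + \gamma\lambda_j = \phi(\gamma)\}$ and suppose $|J(\gamma)| = 1$, say $J(\gamma) = \{j_0\}$. Then there exists $\varepsilon > 0$ with $\nu_k + \gamma\lambda_k \ge \nu_{j_0} + \gamma\lambda_{j_0} + 2\varepsilon$ for all $k \ne j_0$; by $\gamma_n \to \gamma$ this persists with $\varepsilon$ in place of $2\varepsilon$ for large $n$, so $|f_k/f_{j_0}| \le h_n^{\varepsilon/2 + o(1)} \to 0$ and the same dominance argument yields $|p| > 0$, a contradiction. Hence $|J(\gamma)| \ge 2$; choosing distinct $j, k \in J(\gamma)$, the equation $\nu_j + \gamma\lambda_j = \nu_k + \gamma\lambda_k$ gives $-1/\gamma = (\lambda_j - \lambda_k)/(\nu_j - \nu_k)$, the slope of the line through $(\nu_j, \lambda_j)$ and $(\nu_k, \lambda_k)$. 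Every other $(\nu_\ell,\lambda_\ell)$ satisfies $\nu_\ell + \gamma\lambda_\ell \ge \phi(\gamma) = \nu_j + \gamma\lambda_j$, so this line is a supporting line of the convex hull defining the Newton polygon and the segment joining the two points lies on an edge of slope $-1/\gamma$. Only finitely many edges exist, so only finitely many such slopes occur.

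The main obstacle is propagating the $h^{o(1)}$ slack in the coefficients of $p$ through the dominance and cancellation arguments while the convergence $\gamma_n \to \gamma$ is merely asymptotic. This is manageable precisely because every exponent gap that must be strict is controlled by the finite discrete data $\{(\nu_j,\lambda_j)\}_{j=0}^N$ and is therefore bounded below by a fixed positive constant; once $|\gamma_n - \gamma|$ and the $o(1)$ perturbations drop below half of such a gap, the ratios $|f_k/f_{j_0}|$ genuinely decay as a positive power of $h$, and the dominance estimates go through.
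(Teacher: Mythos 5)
Your proposal is correct and follows essentially the same approach as the paper's proof: pass to a subsequence along which $\log|w|/\log h$ converges, rule out $\gamma\le 0$ (including $\gamma=-\infty$) by dominance of the $j=0$ term, observe that cancellation forces at least two terms to share the minimal exponent $\nu_j+\gamma\lambda_j$, and use the supporting-line characterization of convex hulls to place the resulting segment on the Newton polygon. Your explicit introduction of the tropical minimum $\phi(\gamma)=\min_j(\nu_j+\gamma\lambda_j)$ and your closing remarks on why the $h^{o(1)}$ slack does not disrupt the dominance estimates are just a slightly more verbose rendering of the same argument, not a different route.
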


\begin{proof}
  Say we have a family of solutions $w=w(h)$ with $h \downarrow 0$
  and (without loss of generality) with $\abs{w}^{-1}\leq h^{-M}.$  Then since $\log \abs{w}\geq M \log h,$ the ratio $\log \abs{w}/\log h$ lies in $(-\infty, M)$, hence along a subsequence, $\log\abs{w}/\log h$ converges to $\gamma \in [-\infty, M].$  

We first rule out the case $\gamma \le 0$, much as in the proof of
Theorem~\ref{t:3deltagen}. If $\gamma \le 0$ then
for any $\ep>0,$ $\abs{w}>h^\ep$ for $h$ sufficiently small.  Hence
for all $j \geq 1,$ choosing $\ep$ sufficiently small yields
$$
\frac{h^{{\hexpt}_j + o(1)} \abs{w}^{\wexpt_j}}{h^{{\hexpt}_0 + o(1)}
  \abs{w}^{\wexpt_0}} \leq h^{\hexpt_j-\hexpt_0+o(1)}
h^{-\ep(\wexpt_0-\wexpt_j)} \to 0.
$$
Thus
the term in $p$
given by $h^{{\hexpt}_0+o(1)} w^{\wexpt_0}$ is dominant, and it cannot
be cancelled by the other terms and hence \eqref{e:phw} cannot hold.
Hence we may take $\gamma>0$ finite and assume, passing to our
subsequence, that $\log \abs w=(\gamma+o(1))\log h.$

Then
\begin{equation}\label{bigsum}
\sum_{j=0}^N  h^{{\hexpt}_j+\gamma \wexpt_j + o(1)} = 0. %u^{\wexpt_j}=0
\end{equation}
If the minimum exponent ${\hexpt}_j+\gamma \wexpt_j$ occurring in the sum is unique, then as $h \downarrow 0,$ the term $h^{{\hexpt}_j+\gamma \wexpt_j + o(1)}$ dominates all other terms in the sum for $h$ sufficiently small, %hence we must have
%$a_j=0,$ a contradiction.  
hence \eqref{e:phw} again cannot hold.
So %at least 
the minimum exponent in \eqref{bigsum} must occur in at least two terms, say $j$ and $k;$ in particular, then,
$$
{\hexpt}_j+\gamma \wexpt_j={\hexpt}_k +\gamma \wexpt_k,
$$
and $\gamma=-({\hexpt}_k-{\hexpt}_j)/(\wexpt_k-\wexpt_j)$ is the negative reciprocal of the slope of the line connecting these two points.

We claim that the minimality of the exponent $$\rho \equiv
{\hexpt}_j+\gamma \wexpt_j={\hexpt}_k+\gamma \wexpt_k$$ further
entails that the segment $\overline{({\hexpt}_j,
  \wexpt_j)({\hexpt}_k,\wexpt_k)}$ is in the Newton
polygon, which will complete our characterization of $\gamma$ as the
negative reciprocal of the slope of a segment of the 
Newton polygon.  To see this, we first observe that minimality of
$\rho$ means for every $i,$ $\rho \leq {\hexpt}_i+\gamma \wexpt_i.$
Since for every $s \in \RR,$ 
$$
\rho=s {\hexpt}_j + (1-s) {\hexpt}_k + \gamma (s \wexpt_j + (1-s) \wexpt_k)
$$
the point $s ({\hexpt}_j,\wexpt_j) + (1-s) ({\hexpt}_k, \wexpt_k)$
cannot lie in the quadrant $({\hexpt}_i, \wexpt_i)+ (0, \infty)^2,$ as
this would imply $\rho> {\hexpt}_i+\gamma \wexpt_i.$  Thus we have
shown that minimality of $\rho$ means that the interior of every
quadrant $({\hexpt}_i, \wexpt_i)+ [0,\infty)^2$ lies above the
line $$L\equiv \{s ({\hexpt}_j,\wexpt_j) + (1-s) ({\hexpt}_k,
\wexpt_k) \setcolon s \in \RR\},$$ hence the convex hull of the
quadrants $({\hexpt}_i, \wexpt_i)+ [0,\infty)^2$ lies entirely in the
closed half-space above $L.$  Since the segment $\overline{({\hexpt}_j,
  \wexpt_j)({\hexpt}_k,\wexpt_k)}$ of $L$ does lie in the convex hull of
the vertices, it must be in the Newton polygon, as asserted.
\end{proof}

\subsection{Analysis of the secular determinant}

We now employ the method of Newton polygons introduced above to
analyze the case of $N$ delta poles; the main problem is to find a good
description of the secular determinant arising in the equations for a
putative resonant state.

In the following, we employ multiindex notation for combinations of exponents $\beta_j$ ($j=1,\dots, N$) and lengths $\ell_j$ ($j =1, \dots, N$), e.g.\ writing $\sigma \cdot \beta=\sum_j \sigma_j \beta_j.$

Note that our result on this general case of $N$ deltas, like our  Theorem \ref{t:3deltagen} on three arbitrarily spaced deltas, focuses on
resonances in a
narrower region of $\CC$ than in Theorems \ref{t:2delta} and \ref{t:3delta=}: the
imaginary part is a priori $O(h\log(1/h))$.
\begin{theorem}\label{t:Ndelta}
%  Let $V=\sum_{j=1}^{N} V_j\delta(x-x_j)$ with $V_j = C_j h^{1+\beta_j},$ $\beta_j \in (0, 1]$ with $x_j$ increasing, and let $\ell_j=x_{j+1}-x_j.$  Consider the Hamiltonian
%  $$
%P=-h^2\pa_x^2 + V.
%  $$
Consider the Hamiltonian on the real line
  $$
P=-h^2\pa_x^2 + V(x), \qquad V(x)=\sum_{j=1}^{N} V_j\delta(x-x_j),
  $$
where $x_1<\cdots<x_N$, and each $V_j = C_j h^{1+\beta_j}$ for some $C_j \in \mathbb R$ and $\beta_j >0$.  

Let $z = z(h_j)$ be a sequence of resonances such
  that $z = h^{o(1)}$ (as in Theorem \ref{t:3deltagen}) and $\Im z \ge - M h \log(1/h) $ for some
  positive $M$. Then this sequence has a subsequence such that
$\Im z \sim -\gamma h \log (1/h)$ where $\gamma$ is one
  of at most $2^{N-1}-1$ values.  All possible values of $\gamma$ are
positive numbers of the form
$$
\frac{\sigma^+ \cdot \beta -\sigma^-\cdot \beta}{2 (\alpha^+ \cdot \ell -\alpha^- \cdot \ell)}
$$
for some $\sigma^\pm \in \{0, 1, 2\}^N$ and $\alpha^\pm \in \{0, 1\}^{N-1}$, where $\ell_j=x_{j+1}-x_j.$

  \end{theorem}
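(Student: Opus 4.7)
My plan is to reduce the resonance condition to the setting of Lemma \ref{lemma:newton} and to read off admissible values of $\gamma$ from the slopes of the associated Newton polygon. The main technical ingredient is an explicit description of the secular determinant; the rest then follows mechanically.

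First, I will generalize the $N = 2, 3$ derivations to write down the secular determinant for $N$ deltas. Imposing the outgoing conditions $y_0^+ = y_N^- = 0$ and assembling \eqref{eq1}--\eqref{eq2} over $j = 1, \dots, N$ gives a $(2N-2) \times (2N-2)$ linear system in $y_1^\pm, \dots, y_{N-1}^\pm$ whose determinant must vanish. Equivalently, I will package the transfer across each $\delta_j$ as a $2 \times 2$ matrix $\mathcal{M}_j$ whose entries are affine in $R_j, T_j$, and the propagation across each finite interval $I_j$ as a diagonal matrix $\mathcal{P}_j$ with entries $w^{\pm \ell_j}$, and express the resonance equation as the vanishing of a fixed entry of the product $\mathcal{M}_N \mathcal{P}_{N-1} \mathcal{M}_{N-1} \cdots \mathcal{P}_1 \mathcal{M}_1$. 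Each interval $I_j$ then contributes only $w^{\pm \ell_j}$, so after clearing negative powers by multiplying through by $w^{\ell_1 + \cdots + \ell_{N-1}}$ the equation takes the form
\[
D(h, w) \;=\; \sum_{\alpha \in \{0,1\}^{N-1}} c_\alpha(h) \, w^{2 \alpha \cdot \ell} \;=\; 0.
\]

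Next, I will track the $h$-dependence of each $c_\alpha$. Each $c_\alpha$ is a finite sum of monomials $\prod_j R_j^{s_j} T_j^{t_j}$; since each delta $\delta_j$ contributes at most two rows of the secular matrix and each such row is affine in $R_j, T_j$ only, a determinantal expansion forces $s_j \leq 2$. Because $T_j = (1 - \tV_j)^{-1} \to 1$ and $R_j = h^{\beta_j + o(1)}$, each monomial contributes $h^{s \cdot \beta + o(1)}$ with $s \in \{0, 1, 2\}^N$; hence $c_\alpha = h^{\nu_\alpha + o(1)}$ for some $\nu_\alpha = \sigma^\alpha \cdot \beta$, $\sigma^\alpha \in \{0, 1, 2\}^N$. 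The term at $\alpha = (1, \dots, 1)$ has $\nu = 0$ and the maximal $w$-exponent, so after relabeling it as the dominant index $j = 0$ the equation satisfies the hypotheses of Lemma \ref{lemma:newton}. The condition $\Im z \geq -M h \log(1/h)$ with $z = h^{o(1)}$ translates to $|w|^{-1} = O(h^{-M})$, so the lemma yields a subsequence with $\log|w|/\log h \to \gamma$ where $-1/\gamma$ is a slope of the Newton polygon; since $\log|w| = (1 + o(1)) (\Im z)/h$, this rephrases as $\Im z \sim -\gamma h \log(1/h)$. The polygon has at most $2^{N-1}$ vertices (one per $\alpha$), hence at most $2^{N-1} - 1$ edges of finite nonzero slope, and each such edge connects vertices $(\nu_{\alpha^\pm}, 2 \alpha^\pm \cdot \ell)$ and produces
\[
\gamma \;=\; \frac{\nu_{\alpha^-} - \nu_{\alpha^+}}{2(\alpha^+ \cdot \ell - \alpha^- \cdot \ell)} \;=\; \frac{\sigma^+ \cdot \beta - \sigma^- \cdot \beta}{2(\alpha^+ \cdot \ell - \alpha^- \cdot \ell)},
\]
which is precisely the announced form; positivity of $\gamma$ comes from the polygon argument inside the proof of Lemma \ref{lemma:newton}.

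The main obstacle is the combinatorial content of the first two steps: showing cleanly that the per-$R_j$ degree in each $c_\alpha$ really is at most $2$, and ruling out uncontrolled leading-order cancellations. The $N = 3$ case already exhibits nontrivial cancellations of the form $R_1 R_2^2 R_3 - R_1 T_2^2 R_3 = -R_1 R_3 (1 + 2 R_2)$, so some care is needed in asserting that $c_\alpha$ is not accidentally zero to leading order. The transfer-matrix picture should make transparent both why the degree bound survives such cancellations (each $\delta_j$ enters via only two affine factors $\mathcal{M}_j$) and why any replacement exponent $\sigma$ introduced by cancellation still lies in $\{0, 1, 2\}^N$.
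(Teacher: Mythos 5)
Your overall strategy matches the paper's: reduce to the Newton polygon Lemma~\ref{lemma:newton} by establishing that the secular determinant has the form $\sum_{\alpha\in\{0,1\}^{N-1}} c_\alpha(h)\, w^{2\alpha\cdot\ell}$, with $c_\alpha = h^{\sigma^\alpha\cdot\beta + o(1)}$ for $\sigma^\alpha\in\{0,1,2\}^N$, and with the top $w$-power carrying $h^{0+o(1)}$ as its coefficient. The gap is that you assert rather than prove the two structural facts on which everything rests, and you yourself flag them as ``the main obstacle.'' The paper proves them as two inductive lemmas on the $2(N-1)\times 2(N-1)$ determinant: Lemma~\ref{lem:secdet} (cofactor expansion in the first column, tracking the $\sigma_j$-diagonal bookkeeping) establishes that only even powers $w^{-2\alpha\cdot\ell}$ occur, and a second unnumbered lemma shows that every term other than the constant $1$ carries at least one $R_j$ and at least one negative power of $w$, which is precisely the hypothesis $\hexpt_j > \hexpt_0$, $\wexpt_j < \wexpt_0$ ($j\ge 1$) needed for Lemma~\ref{lemma:newton}. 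Without those, the conclusion does not follow; in particular, the claim that $c_{(1,\dots,1)}$ has $\nu = 0$ while all other $c_\alpha$ have $\nu_\alpha > 0$ is not automatic from the affine-in-$R_j$ observation alone.

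Where your proposal does diverge from the paper in a genuinely useful way is the transfer-matrix reformulation $\mathcal M_N \mathcal P_{N-1}\cdots \mathcal P_1 \mathcal M_1$, which you offer as ``equivalent'' but do not develop. If you push it through, it actually disposes of both lemmas more cleanly than the paper's cofactor induction. Writing $\mathcal M_j = T_j^{-1}\bigl(\begin{smallmatrix}1 & -R_j \\ R_j & 1+2R_j\end{smallmatrix}\bigr)$ (using $T_j - R_j = 1$) and $\mathcal P_j = \mathrm{diag}(w^{\ell_j}, w^{-\ell_j})$, the resonance condition is the vanishing of the $(2,2)$ entry. Multiplying through by $\prod T_j$ and then by $w^{|\ell|}$, every monomial picks one diagonal entry $w^{\pm\ell_j}$ from each $\mathcal P_j$, so the exponent $(1\pm 1)\ell_j$ lands in $\{0,2\ell_j\}$ automatically; and the unique path that stays in the second component at every step picks $1+2R_j$ from each $\mathcal M_j$ and $w^{+\ell_j}$ from each $\mathcal P_j$, giving a top term $w^{2|\ell|}\prod_j(1+2R_j) = w^{2|\ell|}(1 + O(h^{\min\beta_j}))$, while every other path deviates at some $j$, thereby picking up at least one factor $\pm R_j$ and at least one $w^{-\ell_j}$. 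This also shows the per-$R_j$ degree is at most $1$ in this formulation (stronger than the $\le 2$ obtained from the determinant picture and compatible with the theorem, which only claims $\sigma_j\le 2$), and it clarifies the $N=3$ cancellation $R_1 R_2^2 R_3 - R_1 T_2^2 R_3 = -R_1(1+2R_2)R_3$ that you correctly flag as suspicious. In short: the route you sketch is viable and arguably cleaner than the paper's, but as written the combinatorial heart of the proof is missing, and you need to carry out the transfer-matrix computation (or some substitute induction) to close it.
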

  
  \begin{remark}
  % In work directed by the first author, this theorem has been improved upon in terms of establishing the precise number of strings of resonances that appear from a given collection delta functions using the distinct slopes of the Newton polygon in the forthcoming work of \cite{brady2023resonances}.
As of the publication of this paper, Theorem~\ref{t:Ndelta} has been
considerably refined in work of Brady \cite{brady2023resonances}, who obtains sharp
estimates on the number of values of $\gamma$ that may arise.
  \end{remark}
  
  \begin{proof}
   Setting $w=e^{-iz/h}$, we recall that the condition $\Im z
      \ge - M h \log(1/h) $ yields $\abs{w}^{-1} \leq h^{-M}.$

      We collect the components $( y_1^-, y_1^+,\dots, y_{N-1}^-, y_{N-1}^+)$ into a vector, which lies in the nullspace of $A_N-I$ where $A_N$ is the $2(N-1)\times 2(N-1)$ matrix given by the equations \eqref{eq1}, \eqref{eq2}:
$$\tiny
\begin{pmatrix}
0 & R_2 w^{-\ell_1} & T_2 w^{-\ell_2} & 0 & 0 & \hdots & 0 &0 &0\\
R_1 w^{-\ell_1} & 0& 0 & 0 & 0 & \hdots & 0 &0 &0\\
0& 0& 0 & R_3 w^{-\ell_2} & T_3 w^{-\ell_3} & \hdots & 0 &0 &0\\
0& T_2 w^{-\ell_1} & R_2 w^{-\ell_2} & 0 & 0 & \hdots & 0 &0 &0\\
\vdots & \vdots  & \vdots & \vdots & \vdots & \ddots & \vdots &\vdots &\vdots\\
0& 0& 0& 0 & 0 & \hdots & 0 &0 &R_N w^{-\ell_{N-1}}\\
0& 0& 0& 0 & 0 & \hdots & T_{N-1} w^{-\ell_{N-2}}&R_{N-1} w^{-\ell_{N-1}} &0
  \end{pmatrix}.
$$
The general pattern is that of a pentadiagonal matrix with zeros on the diagonal and overlapping blocks
$$
\begin{pmatrix}
0 & 0 & R_{j+1} w^{-\ell_{j}} & T_{j+1} w^{-\ell_{j+1}}\\
T_j w^{-\ell_{j-1}} & R_j w^{-\ell_j} & 0 & 0
  \end{pmatrix},
$$
which arises in the rows $y_j^-, y_j^+$ and columns $y_{j-1}^+, y_j^-, y_j^+, y_{j+1}^-.$

Note that in the base case $N=2$ we get the matrix
$$
\begin{pmatrix}
0 & R_2 w^{-\ell_1} \\
R_1 w^{-\ell_1} & 0
\end{pmatrix}
$$
and \begin{equation}\label{Neq2}\det(A_2-I)=1-R_1R_2 w^{-2\ell_1}.\end{equation}
We claim that just as in this example, we always get only even powers of $w^{-\ell_j},$ i.e.\ that we may, more generally, express
\begin{equation}\label{formofdet}
\det(A_N-I)=\sum_{\alpha \in \{0, 1\}^{N-1}} a_\alpha w^{-2 \alpha \cdot \ell}
\end{equation}
where the coefficients $a_\alpha$ are composed of (unspecified) sums of products of $T_j$ and $R_j$'s. This will follow from the following more general lemma.
\begin{lemma}
\label{lem:secdet}
  Let $W_N$ be a $2(N-1)\times 2(N-1)$ matrix of the form
  $$\tiny
\begin{pmatrix}
\sigma_1 & R_2 w^{-\ell_1} & T_2 w^{-\ell_2} & 0 & 0 & \hdots & 0 &0 &0\\
R_1 w^{-\ell_1} & \sigma_2 & 0 & 0 & 0 & \hdots & 0 &0 &0\\
0& 0& \sigma_3 & R_3 w^{-\ell_2} & T_3 w^{-\ell_3} & \hdots & 0 &0 &0\\
0& T_2 w^{-\ell_1} & R_2 w^{-\ell_2} & \sigma_4 & 0 & \hdots & 0 &0 &0\\
\vdots & \vdots  & \vdots & \vdots & \vdots & \ddots & \vdots &\vdots &\vdots\\
0& 0& 0& 0 & 0 & \hdots & 0 &\sigma_{2N-3} &R_N w^{-\ell_{N-1}}\\
0& 0& 0& 0 & 0 & \hdots & T_{N-1} w^{-\ell_{N-2}}&R_{N-1} w^{-\ell_{N-1}}&\sigma_{2N-2}
  \end{pmatrix}
$$
where each $\sigma_j \in \{0, -1\}$.  Then
$\det W_N$ is of the form.
\begin{equation}\label{formofdet2}
\sum_{\alpha \in \{0, 1\}^{N-1}} a_\alpha w^{-2 \alpha \cdot \ell}
\end{equation}
where each $a_\alpha$ is a sum of products of $T_j$ and $R_j$'s.
\end{lemma}
  The greater generality of taking $\sigma_j$ terms on the diagonal rather than all $-1$'s is of no interest except that it enables the following inductive proof to work.
  \begin{proof}[Proof of Lemma]
    The result holds for $N=2$ since we get
    $\sigma_1 \sigma_2- R_1 R_2 w^{-2\ell_1}.$ We now proceed
    inductively. For brevity we denote an entry of the form
      $R_i w^{-\ell_j}$ or $T_i w^{-\ell_j}$
    simply $L_j$ (as we will never employ any cancellation among
    terms, the ambiguity in the index $i$ and the difference
      between $T_i$ and $R_i$ are of no importance); we
    also write $\pm$ to be independent and completely unimportant
    signs in the following computation.  We simply need to show that
    each $L_j$ appears in each summand in the determinant either not
    at all or as $L_j^2.$

In our abbreviated notation, we now have
$$
W_N=\begin{pmatrix} 
\sigma_1 & L_1 & L_2 & 0 & 0 & \hdots & 0 &0 & 0\\
L_1 & \sigma_2 & 0 & 0 & 0 & \hdots & 0 &0 &0\\
0 & 0& \sigma_3 & L_2 & L_3 & \hdots & 0 &0 &0\\
0&L_1 & L_2 & \sigma_4 & 0 & \hdots & 0 &0 &0\\
\vdots & \vdots & \vdots  & \vdots & \vdots & \vdots & \ddots & \vdots &\vdots \\
0 & 0& 0& 0 & 0 & \hdots & 0 &\sigma_{2N-3} &L_{N-1}\\
0& 0& 0& 0 & 0 & \hdots & L_{N-2} & L_{N-1} &\sigma_{2N-2}
\end{pmatrix}.
$$    

    Decomposing $W_N$ by cofactors in the first column yields
\begin{equation}\begin{aligned}
\sigma_1 &\det \begin{pmatrix} \sigma_2 & 0 & 0 & 0 & \hdots & 0 &0 &0\\
 0& \sigma_3 & L_2 & L_3 & \hdots & 0 &0 &0\\
L_1 & L_2 & \sigma_4 & 0 & \hdots & 0 &0 &0\\
\vdots & \vdots  & \vdots & \vdots & \vdots & \ddots & \vdots &\vdots \\
 0& 0& 0 & 0 & \hdots & 0 &\sigma_{2N-3} &L_{N-1}\\
 0& 0& 0 & 0 & \hdots & L_{N-2} & L_{N-1} &\sigma_{2N-2}
\end{pmatrix}\\
& - L_1
\det \begin{pmatrix} L_1 & L_2 & 0 & 0 & \hdots & 0 &0 &0\\
 0& \sigma_3 & L_2 & L_3 & \hdots & 0 &0 &0\\
L_1 & L_2 & \sigma_4 & 0 & \hdots & 0 &0 &0\\
\vdots & \vdots  & \vdots & \vdots & \vdots & \ddots & \vdots &\vdots\\
 0& 0& 0 & 0 & \hdots & 0 &\sigma_{2N-3} &L_{N-1}\\
 0& 0& 0 & 0 & \hdots & L_{N-2} & L_{N-1} &\sigma_{2N-2}
\end{pmatrix}\\
& \hspace{.25cm} \equiv \sigma_1 \det B_N - L_1 \det C_N.
\end{aligned}\end{equation}
We deal with these terms as follows.  Decomposing $B_N$ further by cofactors in its first row gives a single term that equals $\sigma_1 \sigma_2$ times the determinant of a matrix of the form $W_{N-1},$ which by the inductive hypothesis is a sum of terms of the form coefficient times $L_2^{2 \alpha_2}\dots L_{N-1}^{2 \alpha_{N-1}}$ with $\alpha_j \in \{0, 1\};$ hence this term is of the desired form.

  Likewise, decomposing $\det C_N$ by cofactors in the first column gives, from the top left $L_1$ entry, a term $L_1^2$ times a term of the form $\det W_{N-1},$ hence yields a sum of terms $L_1^2 L_2^{2 \alpha_2}\dots L_{N-1}^{2 \alpha_{N-1}}$ by the inductive hypothesis.  Finally the $L_1$ entry in position $(3,1)$ gives a term
    $$
L_1^2
\det \begin{pmatrix}  L_2 & 0 & 0 & \hdots & 0 &0 &0\\
 \sigma_3 & L_2 & L_3 & \hdots & 0 &0 &0\\
 0& 0 & \sigma_5 & L_3 & L_4 & \hdots & 0 \\
 0& L_2 & L_3 & \sigma_6 & 0 & \hdots & 0 \\
 \vdots  & \vdots & \vdots & \vdots & \ddots & \vdots &\vdots\\
 0& 0 & 0 & \hdots & 0 &\sigma_{2N-3} &L_{N-1}\\
 0& 0 & 0 & \hdots & L_{N-2} & L_{N-1} &\sigma_{2N-2}
\end{pmatrix}
\equiv L_1^2 \det D_N  .   $$
Now exchanging the first two rows of $D_N$ gives another matrix of the form $W_{N-1}$ but where the $(2,2)$ entry is necessarily $0$ (rather than allowed to be $-1$).  Thus by induction, this term is also of the desired form.  Note that it was this last case that necessitated allowing the more general $\sigma_j$ entries on the diagonal in the inductive hypothesis.  This completes the proof of the lemma.
\end{proof}

We have now established \eqref{formofdet}.  Recall that the
coefficients $a_\alpha$ are of the form of $\pm$ products of
reflection and transmission coefficients $T_j$ and $R_j$ given by
\eqref{TR}, with $\tV_j$ given by \eqref{Vtilde}.  In the region
  $z=h^{o(1)}$, we have $\tV_j = h^{\beta_j+o(1)}$, hence 
  \begin{equation}\label{TR2}\begin{aligned} T_j&=1+h^{\beta_j + o(1)},\\
 R_j&= h^{\beta_j +
  o(1)},\end{aligned}\end{equation}
which implies that the terms $a_\alpha$ are all of the form
$h^{\mu + o(1)}$ for some values of $\mu$ given by sums of
powers $\beta_j$ occurring in the reflection coefficients $R_j.$ Since
each $R_j$ appears in at most two rows, we note that the only
possibilities for the appearance of $R_j$ in a coefficient $a_\alpha$
are as $R_j^{\sigma_j}$ with $\sigma_j\in\{0, 1, 2\}.$

We have now established that the equation
$$
\det(A_N-I)=0
$$
is of the form
\begin{equation}\label{formofeq}
\sum_{\alpha \in \{0, 1\}^{N-1}}  h^{\mu_\alpha + o(1)} w^{-2\alpha\cdot \ell}=0.
\end{equation}
We now claim further that all terms except the term $1$
are of the form $h^{\mu_\alpha + o(1)} w^{-2\alpha\cdot \ell}$
where $\alpha \neq 0$ and $\mu_\alpha>0.$  By \eqref{TR2}, this
follows from the following lemma.  As above we use the notation $L_i$
to be either $T_i w^{-\ell_j}$ for some $i,j,$ or $R_i w^{-\ell_j}.$
\begin{lemma}
Every term in the secular determinant $\det (A_N-I)$ except the
diagonal term $1$
is of the form $R_j E w^{-\alpha\cdot \ell}$ where $E$ is some
product of reflection and transmission coefficients and $\alpha \neq 0.$
  \end{lemma}
In other words, each nonconstant term has at least one reflection
coefficient and a negative power of $w.$
\begin{proof}
We again work by induction.  By \eqref{Neq2}, the result certainly
holds for $N=2.$  Cofactor decomposition as above in the first column
then yields
$$
\det (A_N-I)=(-1) \det B_N - R_1 w^{-\ell_1} \det C_N.
$$
Since $\det C_N$ is a sum of product of reflection and transmission
coefficients and negative powers of $w$, the second term certainly
satisfies the desired conclusion, so we need only examine the first.
The matrix $B_N$ is given by
$$\tiny
B_N = \begin{pmatrix} -1 & 0 & 0 & 0 & \hdots & 0 &0 &0\\
 0& -1 & R_3 w^{-\ell_2} & T_3 w^{-\ell_3} & \hdots & 0 &0 &0\\
T_2 w^{-\ell_1} & R_2w^{-\ell_2} & -1 & 0 & \hdots & 0 &0 &0\\
\vdots & \vdots  & \vdots & \vdots & \vdots & \ddots & \vdots &\vdots \\
 0& 0& 0 & 0 & \hdots & 0 &-1 &R_Nw^{-\ell_{N-1}}\\
 0& 0& 0 & 0 & \hdots & T_{N-1} w^{-\ell_{N-2}}&R_{N-1} w^{-\ell_{N-1}} &-1
\end{pmatrix}.
$$
Cofactor expansion in the first row now allows us to write $\det
B_N=-\det(B_{N-1})$, a secular determinant of the
same form as $\det(A_N-I),$ hence the lemma now follows by induction.
\end{proof}

Now we return to the representation \eqref{formofeq} of the secular determinant.
Multiplying through by $w^{2\abs{\ell}}$ (with $\abs\ell\equiv \sum \ell_j$ in multiindex notation) gives an equation with positive powers of $w:$ 
\begin{equation}\label{finalmess}
\sum_{\alpha \in \{0, 1\}^{N-1}} h^{\mu_\alpha + o(1)} w^{2(\abs\ell-\alpha\cdot \ell)}=0.
\end{equation}
Here the leading powers $\mu_\alpha$ are all sums of powers
arising in the delta potentials of the form
$$\mu=\sigma \cdot \beta$$ for $\sigma \in \{0, 1, 2\}^N.$
Moreover by the preceding lemma there is  a ``leading'' term $h^0 w^{2\abs{\ell}},$ with all
other terms having \emph{both} a higher power of $h$ and a lower power
of $w.$

Thus
Lemma~\ref{lemma:newton} applies to show that any sequence of
solutions to this equation with $\abs{w}^{-1}=O(h^{-M})$ has a subsequence with
$\log{\abs{w}} \sim \gamma \log h$ for $\gamma$ a strictly negative reciprocal
slope arising in the Newton polygon associated to the function
\eqref{finalmess}.  Since there are at most $2^{N-1}$ distinct powers
of $w$ in this equation there are at most $2^{N-1}-1$ nonzero finite
slopes in the Newton polygon, and $\gamma$ may only take the negative
reciprocal of one of these values.  Note further that owing to our
characterization of the exponents of $h$ and $w$, all $\gamma$'s are
thus of the form
$$
\frac{\sigma^+ \cdot \beta -\sigma^-\cdot \beta}{2 (\alpha^+ \cdot \ell -\alpha^- \cdot \ell)}
$$
for some $\sigma^\pm \in \{0, 1, 2\}^N$ and $\alpha^\pm \in \{0, 1\}^{N-1}.$

Now given
$$
\log \abs{w} \sim\gamma \log h
$$
and $w=e^{-iz/h}$ we of course get
$$
\Im z \sim - \gamma h \log (1/h)
$$
as desired.
\end{proof}

\begin{remark}
It is instructive to compare the general result of
Theorem~\ref{t:Ndelta} to the special cases of two and three poles
analyzed above.  In the case of two delta poles,
Theorem~\ref{t:Ndelta} correctly implies that as $h\downarrow 0$ there can be at most a
single curve of resonances $\Im z\sim -\gamma h \log (1/h)$ within any
set $\Im z >-M h \log (1/h)$ with $M$ fixed.  In the
case of three deltas, however, the bound given by this theorem is that
there can be at most $3$ such curves, while Theorem~\ref{t:3deltagen}
shows that $2$ is in fact the sharp maximum number of resonance lines.  This discrepancy is
clearer if we examine the Newton polygon for \eqref{e:3dreseq}:
recalling that $R_j\sim C_j h^{\beta_j}$ we see that the vertices
involved are
$$
(\beta_1+\beta_3, 0),\ (\beta_2+\beta_3, 2\ell_1),\ (\beta_1+\beta_2,
2\ell_2),\ (0, 2\ell_1+2\ell_2).
$$
A priori, this many vertices could yield a Newton diagram with $3$
nonvanishing finite slopes, hence we conclude naively from
Theorem~\ref{t:Ndelta} that there could be at most $3$ possible
values of $\gamma$.  Note, encouragingly, that the form of the secular
determinant established in the proof of Theorem~\ref{t:Ndelta} is indeed
giving the sharp overall form of the equation \eqref{e:3dreseq}.  
But
it turns out on closer inspection of the equation  that not every
possible Newton polygon can arise here.  In particular, under the
assumption \eqref{e:b3l1b1l2} (which we recall is always valid up to
reversing the $x$-coordinate), the vertex $(\beta_1+\beta_2, 2
\ell_2)$ always lies strictly above the line
$\overline{(0,2\ell_1+2\ell_2,0)(\beta_1+\beta_3, 0)}$, hence cannot
lie in the Newton polygon.  Thus there can be either two or one
nonzero finite slopes
in the Newton polygon, depending on whether $ (\beta_2+\beta_3,
2\ell_1)$ lies below or above this line; this is determined by the
condition
$$
 \beta_1 \ell_2  \gtrless  \beta_2\ell_1 + \beta_2\ell_2 + \beta_3\ell_1,
$$
i.e.\ agrees with the analysis of the cases in
Remark~\ref{rem:simpler}.  (See Figure~\ref{fig:newton}.)
\end{remark}

\begin{figure}[h!]\begin{center}
    \begin{tikzpicture}[scale=.5]
      \def\diam{0.1}
      \def\os{2.5}
      \draw[thick,->] (0, 0) -- (0, 10);
  \draw[thick,->] (0, 0) -- (10,0);
  \filldraw (0, 8) circle (\diam);
  \node[align=left] at (0+\os, 8){$(0, 2\ell_1+2\ell_2)$};
  \filldraw (7,0) circle (\diam);
    \node[align=left] at (7+\os/2, 0+\os/3){$(\beta_1+\beta_3,0)$};
    \filldraw (5,5) circle (\diam);
      \node[align=left] at (5+\os, 5){$(\beta_1+\beta_2, 2\ell_2)$};
  \filldraw (2,3) circle (\diam);
  \node[align=left] at (2+\os, 3){$(\beta_2+\beta_3, 2\ell_1)$};
  \draw[blue, very thick, dashed] (0, 8) -- (2,3);
  \draw[blue, very thick, dashed] (2,3) -- (7,0);
  \draw[blue, very thick, dashed] (7,0) -- (10,0);
    \draw[blue, very thick, dashed] (0,8) -- (0,10);
\filldraw[fill=red, opacity=0.3] (0,10) -- (0,8) -- (2,3) -- (7,0) -- (10,0) -- (10,10) -- (0,10);
\end{tikzpicture}
\end{center}
\caption{\label{fig:newton}
Newton polygon for the case $N=3$.  The Newton polygon is the union of
the dashed lines forming the boundary of the shaded region.  The point $(\beta_1+\beta_2,
2\ell_2)$ does not lie on the boundary of the shaded region, i.e., is
not in the Newton polygon.  This depicts the case $\beta_2\ell_1 +
\beta_2\ell_2 + \beta_3\ell_1 < \beta_1 \ell_2$, which guarantees that
$(\beta_2+\beta_3, 2\ell_1)$ does lie in the Newton polygon, hence two
distinct nonzero finite slopes arise.
}
\end{figure}
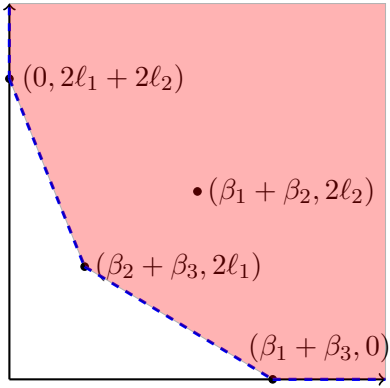

\section{Some numerical studies and discussion of the results}

We can program the secular determinant matrix $A_N = A_N(z)$ from the proof of Theorem~\ref{t:Ndelta} into the software program {\it Mathematica}, and study the resulting complex equations
\begin{equation}\label{e:detia}
 \det (I - A_N(z)) = 0.
\end{equation}
Resonances occur at solutions to \eqref{e:detia}.  It is particularly informative to plot the argument of the left hand side of \eqref{e:detia}; then poles become clear points about which the phase angle winds.  Such plots allow us to numerically observe the results in Theorems \ref{t:2delta}, \ref{t:3delta=}, and \ref{t:3deltagen} in the case of $2$ or $3$ delta functions, and to test the bounds of what we can prove in the general case in Theorem \ref{t:Ndelta}.  Our findings are presented in Figure \ref{fig:23delta} and Figure \ref{fig:Ndelta} respectively.  Throughout, we have taken $h = 10^{-6}$ and plotted the argument on a region of the complex plane such that $1-3h < \Re z < 1+3h$ and $-3h < \Im z < 0$.  

\begin{figure}[ht]
\centering
\includegraphics[width=0.45\textwidth]{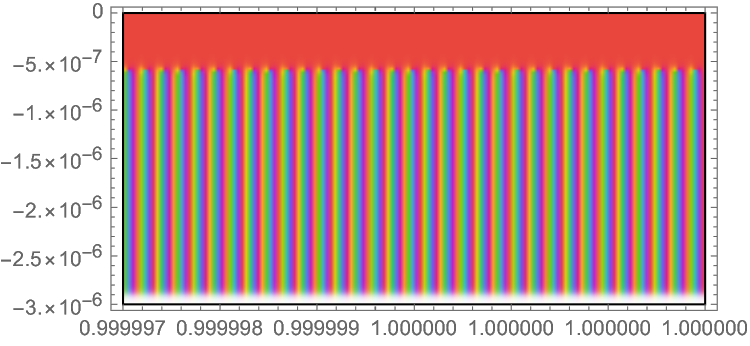}
\includegraphics[width=0.10\textwidth]{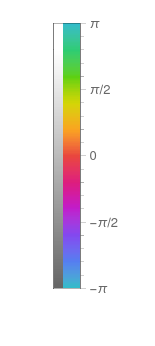} \\
\includegraphics[width=0.45\textwidth]{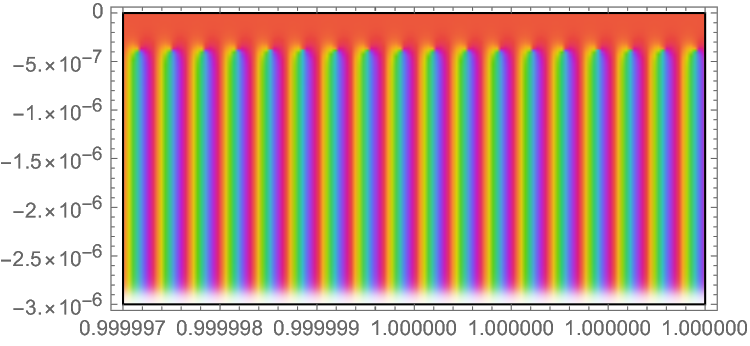} 
\includegraphics[width=0.45\textwidth]{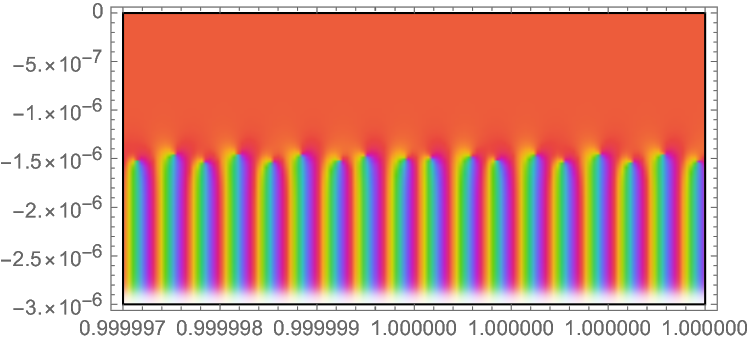} 
\caption{(Top)  A plot showing the resonances arising in the setting of $N=2$ delta functions, and a legend for the plot showing the correspondence between colors and complex arguments of the left hand side of \eqref{e:detia}.  (Bottom) The cases of $N=3$ delta functions in the setting of  one line of resonances from Theorem \ref{t:3deltagen}, Case $(1)$ (Left) and two lines of resonances from Theorem \ref{t:3deltagen}, Case $(2)$ (Right).  }
\label{fig:23delta}
\end{figure}

\begin{figure}[ht]
\centering
\includegraphics[width=0.45\textwidth]{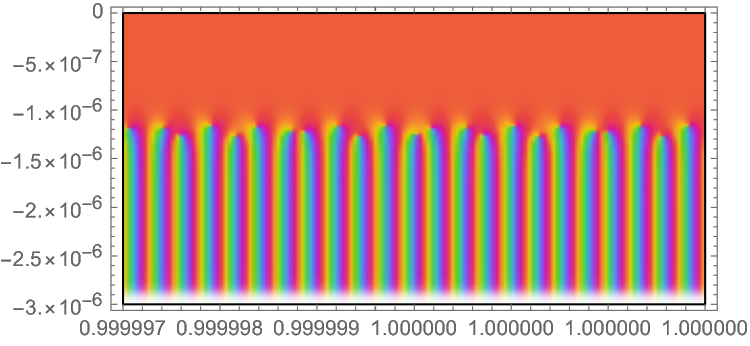} 
\includegraphics[width=0.45\textwidth]{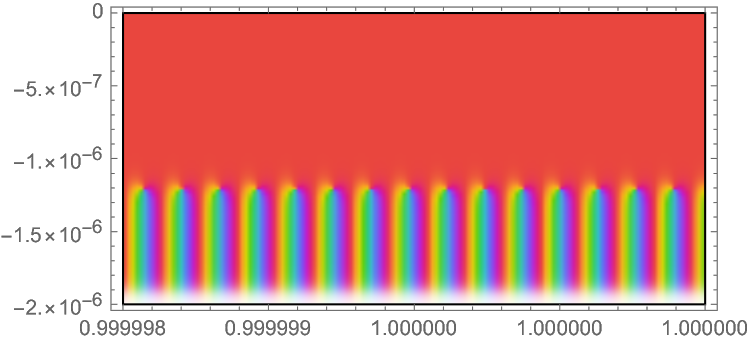} \\
\includegraphics[width=0.45\textwidth]{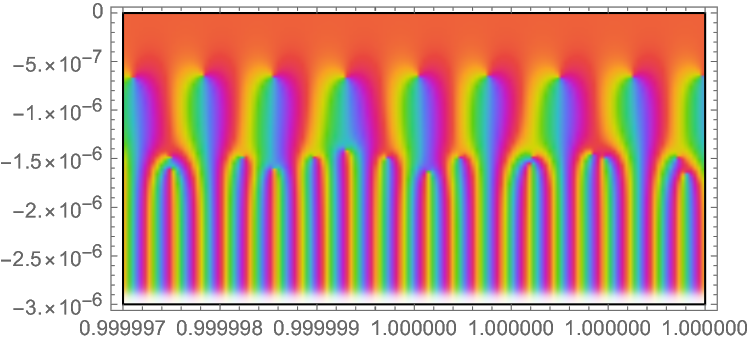} 
\includegraphics[width=0.45\textwidth]{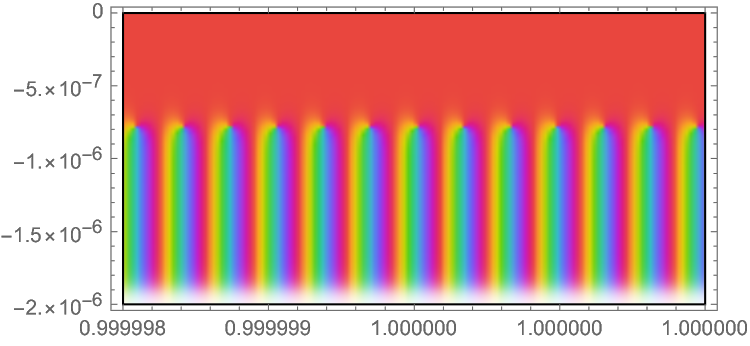} 
\caption{(Top)  A plot showing the resonances arising in the setting of $N=5$.  (Bottom) The cases of $N=6$.  In both cases, by varying values of $\beta$ and $\ell$, we can generate either multiple resonance lines or only one line.}
\label{fig:Ndelta}
\end{figure}

In Figure \ref{fig:23delta}, we observe that for $N=2$, we have one line of resonances as in Theorem \ref{t:2delta}.  This line is demonstrated in the top image with $\beta_1 = 1$, $\beta_2 = .5$, $x_1 = -10$ and $x_2 = 5 \sqrt{2}$, which give
\[
\Im z \sim   -\frac {\beta_1 +\beta_2}{2(x_2-x_1)} h \log(1/h)  \approx -6 \cdot 10^{-7}.
\]
  Meanwhile for $N=3$, we can choose $\beta_1, \beta_2, \beta_3$ such that we are either in the setting of Theorem \ref{t:3deltagen}, Case $(1)$ (bottom left) or Theorem \ref{t:3deltagen}, Case $(2)$ (bottom right).  In these cases, we took $x_1 = -5$, $x_2 = 0$, $x_3 = 3 \sqrt{2}$, with $\beta_1 = \beta_2 = \beta_3 = 1$ and hence
\[
\Im z \sim   -\frac {\beta_1 +\beta_3}{2(x_3-x_1)} h \log(1/h)  \approx -3 \cdot 10^{-7}.
\]
for the image on the left and $\beta_1 = .9, \beta_2 = .1, \beta_3 = 1$ and hence
\[
 \Im z_+ \sim   -\frac {\beta_3  -\beta_2}{2(x_3-x_2)} h \log(1/h)  \approx -1.5 \cdot 10^{-6}, 
\]
\[
\Im z_- \sim   -\frac {\beta_1  +\beta_2}{2(x_2-x_1)} h \log(1/h)  \approx -1.4 \cdot 10^{-6},
\]
for the image on the right.  These plots match the results of our theorem perfectly.

In Figure \ref{fig:Ndelta}, we demonstrate that in the case of either $N=5$ or $N=6$, we can achieve a variety of outcomes.  Indeed, setting $\beta_j = 1$ for all $j$, we observe what appears to be a single line of resonances looking at the figures on the left.  Selecting $\beta$ values that lead to different interaction strengths, we convincingly observe three resonance lines in the top right plot computed with $x_1 = -5$, $x_2  = -\sqrt{2}$, $x_3 = 0$, $x_4 = 2 \sqrt{2}$, $x_5 = 7$ and $\beta_1 = 1$, $\beta_2 = .6$, $\beta_3 = .1$, $\beta_4 = .6$, $\beta_5 = 1$.  A similar result holds for $6$ $\delta$ functions in the bottom right plot using $x_1 = -7$, $x_2  = -2\sqrt{2}$, $x_3 = -\pi/4$, $x_4 = \sqrt{2}$, $x_5 = e$, $x_6 = 5$ and $\beta_1 = 1$, $\beta_2 = .1$, $\beta_3 = .5$, $\beta_4 = .2$, $\beta_5 = .5$, $\beta_6 = 1$.  

We thus observe that while it does appear possible to generate
multiple strings of resonances, the $2^{N-1}-1$ upper bound of
Theorem \ref{t:Ndelta} may be far from optimal.
 Indeed, it is intriguing
that in the case when all $\beta$ values are equal, the numerical
findings are that there is only one string of resonances.  Hence there may,
for instance, be
symmetry reductions that allow us to dramatically improve
the bounds on the number of resonance lines.

\bibliography{all}{}
\bibliographystyle{plain}

\end{document}